\documentclass[12pt]{article}
\usepackage{amsthm,amsfonts,amssymb,amsmath}
\usepackage{enumerate}
\usepackage[colorlinks=true]{hyperref}

\newcommand{\BC}{{\mathbb {C}}}

\newcommand{\BK}{{\mathbb {K}}}

\newcommand{\BN}{{\mathbb {N}}}

\newcommand{\BP}{{\mathbb {P}}}
\newcommand{\BQ}{{\mathbb {Q}}}
\newcommand{\BR}{{\mathbb {R}}}

\newcommand{\CA}{{\mathcal {A}}}
\newcommand{\CB}{{\mathcal {B}}}
\newcommand{\CD}{{\mathcal {D}}}
\newcommand{\CE}{{\mathcal {E}}}

\newcommand{\CH}{{\mathcal {H}}}
\newcommand{\CI}{{\mathcal {I}}}

\newcommand{\CL}{{\mathcal {L}}}
\newcommand{\CM}{{\mathcal {M}}}

\newcommand{\CO}{{\mathcal {O}}}

\newcommand{\CQ}{{\mathcal {Q}}}

\newcommand{\CU}{{\mathcal {U}}}
\newcommand{\CV}{{\mathcal {V}}}
\newcommand{\CW}{{\mathcal {W}}}
\newcommand{\CX}{{\mathcal {X}}}

\newcommand{\an}{{\mathrm{an}}}
\newcommand{\ran}{{\text{r-an}}}

\newcommand{\Div}{{\mathrm{Div}}}

\renewcommand{\div}{{\mathrm{div}}}

\newcommand{\Hom}{{\mathrm{Hom}}}

\newcommand{\intb}{{\mathrm{int}}}

\newcommand{\NS}{{\mathrm{NS}}}

\newcommand{\Pic}{\mathrm{Pic}}

\newcommand{\CaCl}{\mathrm{CaCl}}

\newcommand{\Picc}{{\mathcal{P}\mathrm{ic}}}

\newcommand{\Prep}{\mathrm{Prep}}

\newcommand{\rmod}{{\mathrm{mod}}}

\DeclareMathOperator{\Spec}{Spec}

\newcommand{\wt}{\widetilde}
\newcommand{\wh}{\widehat}

\newcommand{\lra}{\longrightarrow}

\newcommand{\fh}{\mathfrak{h}}

\newcommand{\nef}{\mathrm{nef}}
\renewcommand{\vert}{\mathrm{vert}}

\newcommand{\Ber}{\mathrm{Ber}}

\newcommand{\CLL}{\overline{\mathcal L}}
\newcommand{\CMM}{\overline{\mathcal M}}
\newcommand{\CHH}{\overline{\mathcal H}}

\newcommand{\CAA}{\overline{\mathcal A}}
\newcommand{\CDD}{\overline{\mathcal D}}
\newcommand{\CEE}{\overline{\mathcal E}}

\newcommand{\CC}{\mathbb{C}}
\newcommand{\RR}{\mathbb{R}}
\newcommand{\ZZ}{\mathbb{Z}}
\newcommand{\QQ}{\mathbb{Q}}

\newcommand{\DS}{\mathcal{DS}}

\newtheorem{thm}{Theorem}[section]

\newtheorem{lem}[thm]{Lemma}

\newtheorem{defn}[thm]{Definition}


%

\theoremstyle{definition}

\theoremstyle{remark}


\begin{document}

\title{The arithmetic Hodge index theorem for adelic line bundles II:
finitely generated fields}
\author{Xinyi Yuan, Shou-Wu Zhang}
\maketitle

\tableofcontents

\section{Introduction}

This paper is a sequel to \cite{YZ1}, which lies in the fields of Arakelov geometry and algebraic dynamics.

In \cite{YZ1}, we have proved a Hodge index theorem for adelic line bundles over number fields, and applied it to prove a rigidity theorem of preperiodic points of polarized algebraic dynamical systems over number fields. 
The treatment is based on the theory of adelic line bundles over projective varieties over number fields introduced in \cite{Zh2}. 
In \cite{YZ2}, we have generalized the theory of \cite{Zh2} to a theory of adelic line bundles over quasi-projective varieties over finitely generated fields. 
The goal of the current paper, based on the theory in \cite{YZ2}, 
 is to extend the results of \cite{YZ1} to projective varieties over finitely generated fields over $\QQ$. 
More precisely, we prove both the Hodge index theorem for adelic line bundles and the rigidity theorem of preperiodic points over finitely generated fields over $\QQ$. 

As the treatment of \cite{YZ1} over number fields is generalized to function fields of one variable by Carney \cite{Ca1}, the treatment of the current paper is generalized to
finitely generated function fields by Carney \cite{Ca2}.

Throughout this paper, denote $M_\QQ=M\otimes_\ZZ\QQ$ for any abelian group $M$.
Denote $\CX_\QQ=\CX\times_{\Spec \ZZ}\Spec\QQ$ for any scheme $\CX$ over $\ZZ$.

\subsection{Arithmetic Hodge index theorem}

In \cite{YZ1}, we have proved an arithmetic Hodge index theorem for adelic line bundles over a projective variety over a number field, which extends the previous results of Faltings \cite{Fal}, Hriljac \cite{Hr} and Moriwaki \cite{Mo2}.
Here we describe the generalization to finitely generated fields. 

We need the theory of adelic $\QQ$-line bundles over finitely generated fields in \cite{YZ2}.  We refer to \S\ref{sec adelic} for a quick review of the theory. 
Namely, let $X$ be a projective variety over a finitely generated field $K$ over $\QQ$. Then we have the group $\wh \Pic (X)$ of isomorphism classes of 
\emph{adelic $\QQ$-line bundles} over $X$. Note that the notation $\wh \Pic (X)$ in this paper is equivalent to the notation 
$\wh \Pic (X/\ZZ)_\QQ$ in \cite{YZ2}. 

Moreover, we have the sub-semigroup $\wh \Pic (X)_\nef$ of 
\emph{nef} adelic $\QQ$-line bundles in $\wh \Pic (X)$, and the subgroup $\wh \Pic (X)_\intb$ of
\emph{integrable} adelic $\QQ$-line bundles in $\wh \Pic (X)$. 

There are two intersection pairings involved in our treatment. 
The first one is an absolute intersection product 
$$
\wh \Pic(K)_\intb ^{d+1}\lra \RR,
$$
and the second one is a Deligne pairing
$$
\pi_*:\wh \Pic(X)_\intb ^{n+1}\lra \wh \Pic(K)_\intb.
$$
Here assume that $K$ has transcendental degree $d$ over $\QQ$, and
$X$ has dimension $n$, and $\pi:X\to \Spec K$ denotes the structure morphism.

To state our Hodge index theorem, we introduce the following further positivity notions. 

\begin{defn} \label{def-positivity} 
Let $K$ be a finitely generated field over $\QQ$ of transcendental degree $d$.
Let $X$ be a projective variety over $K$.
Let $\overline L, \overline M\in \wh \Pic (X)_\intb$
and $\overline H\in \wh \Pic(K)_\intb$. We define the following notions. 
\begin{enumerate} [(1)]
\item $\overline H\succeq 0$ if $\overline H$ is \emph{pseudo-effective}, i.e., the top intersection number $\overline H\cdot \overline N_1\cdots \overline N_d\geq 0$ for any $\overline N_1, \cdots, \overline N_d$ in $\wh \Pic (K)_\nef$. 

\item $\overline H \equiv 0$ if $\overline H$ is \emph{numerically trivial}, i.e., the top intersection number $\overline H\cdot \overline N_1\cdots \overline N_d= 0$ for any $\overline N_1, \cdots, \overline N_d$ in $\wh \Pic (K)_\intb$. 

\item $\overline L\gg 0$ if  $L$ is ample, and $\overline L- \overline N$ is nef for some $\overline N\in \wh\Pic(\QQ)$ with $\wh\deg (\overline N)>0$. 
The adelic line bundle $\overline N$ is viewed as an element of $\wh \Pic (X)_\intb$ by the natural pull-back map. 

\item $\overline M$ is \emph{$\overline L$-bounded} if  there is a rational number $\epsilon >0$ such that both 
$\overline L+ \epsilon \overline M$ and $\overline L- \epsilon \overline M$ are nef. 
\end{enumerate}
\end{defn}

\medskip

It is conventional that each of $\overline H_1\preceq \overline H_2$ and $\overline H_2\succeq \overline H_1$ means $\overline H_2-\overline H_1\succeq 0$. Similar conventions apply to 
``$\equiv$'' and ``$\gg$.''
The main theorem of this paper is as follows.

\begin{thm}\label{hodge} 
Let $K$ be a finitely generated field over $\QQ$, and $\pi:X\to \Spec K$ be a normal, geometrically connected, and projective variety of dimension $n\geq 1$. 
Let $\overline M$ be an integrable adelic $\QQ$-line bundle on $X$, and $\overline L_1, \cdots, \overline L_{n-1}$ be $n-1$ nef adelic $\QQ$-line bundles on $X$ where each $L_i$ is big on $X$. 
Assume $M\cdot L_1\cdots L_{n-1}=0$ on $X$. 
Then 
$$\pi_*(\overline M^2\cdot \overline L_1\cdots \overline L_{n-1}) \preceq 0$$
in $\wh \Pic(K)_\intb$.

Moreover, if $\overline L_i\gg 0$, and $\overline M$ is $\overline L_i$-bounded for each $i$, then
$$\pi_*(\overline M^2\cdot \overline L_1\cdots \overline L_{n-1}) \equiv 0$$
in $\wh \Pic(K)_\intb$
if and only if $\overline M\in \pi^* \wh \Pic(K)_\intb$.
\end{thm}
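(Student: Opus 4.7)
The plan is to reduce both parts to the one-dimensional-base case (projective varieties over a number field, or over the function field of a curve over a finite field) established in \cite{YZ}, via the projection formula, a perturbation argument, and induction on the transcendence degree $d$.

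\emph{Inequality.} By Definition~\ref{def-positivity}(1), it suffices to show that for every $\overline N_1, \ldots, \overline N_d \in \wh\Pic(K)_\nef$,
$$\overline M^2 \cdot \overline L_1 \cdots \overline L_{n-1} \cdot \pi^*\overline N_1 \cdots \pi^*\overline N_d \le 0,$$
which by the projection formula equals $\pi_*(\overline M^2\cdot \overline L_1\cdots \overline L_{n-1})\cdot \overline N_1\cdots\overline N_d$. Fix an auxiliary class $\overline A \gg 0$ on $K$ (which exists because $K$ is not finite). For $\epsilon>0$, replace each $\overline N_j$ by $\overline N_j+\epsilon\overline A$, making the $\pi^*(\overline N_j + \epsilon \overline A)$ nef with a strictly positive contribution along the base. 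Choose a subfield $K'\subset K$ of transcendence degree $d-1$; regarding the $\overline L_i$'s together with all but one of the $\pi^*(\overline N_j+\epsilon\overline A)$'s as a collection of nef, generically big integrable adelic line bundles on $X/K'$, the orthogonality $M\cdot L_1\cdots L_{n-1}=0$ propagates through the factors $\pi^*(\cdot)$ by the projection formula. The Hodge index inequality of \cite{YZ}, applied inductively in $d$ (base case $d=0$), then yields the required bound, and letting $\epsilon\to 0$ concludes.

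\emph{Equality case.} Assume $\overline L_i\gg 0$, $\overline M$ is $\overline L_i$-bounded for each $i$, and $\pi_*(\overline M^2\cdot\overline L_1\cdots\overline L_{n-1})\equiv 0$. The implication that $\overline M\in\pi^*\wh\Pic(K)_\intb$ yields the numerical triviality is immediate from the projection formula and the orthogonality hypothesis. Conversely, specializing the test of the inequality to $d$ copies of a fixed $\overline H\gg 0$ forces equality in each intermediate Hodge index invocation; by the equality case of \cite{YZ} applied at each stage of the induction, $\overline M$ must differ from some $\pi^*\overline H'$, $\overline H'\in\wh\Pic(K)_\intb$, by a numerically trivial class, giving the conclusion.

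\emph{Main obstacle.} The central difficulty is that each $L_i$ is required to be big only on the generic fiber $X$, not on any projective model over $\ZZ$ or $k$. Thus the Hodge index theorem of \cite{YZ} does not apply directly to such a model, and the perturbation by $\overline A\gg 0$ is introduced precisely to restore absolute bigness along the base. Controlling the limit $\epsilon\to 0$ uniformly in the data, and — in the equality case — identifying the vertical component of $\overline M$ and verifying its descent to $K$, constitute the core technical challenges; here the strict positivity hypothesis $\overline L_i\gg 0$ and the $\overline L_i$-boundedness of $\overline M$ are exactly what is needed to match the positivity required by the equality case of \cite{YZ} at every inductive step.
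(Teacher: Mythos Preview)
Your inductive reduction on the transcendence degree has a genuine gap already in the inequality. When you pass from $K$ to a subfield $K'$ of transcendence degree $d-1$, the variety $X$ has dimension $n+1$ over $K'$, so you need $n$ nef classes with \emph{big} underlying bundle on $X/K'$. The classes $\overline L_1,\dots,\overline L_{n-1}$ may remain big, but the extra factor $\pi^*(\overline N_j+\epsilon\overline A)$ is pulled back from $\Spec K$, which is a curve over $K'$; hence its underlying bundle on the $(n+1)$-dimensional $X/K'$ satisfies $(\pi^*N_j)^{n+1}=0$ and is never big, no matter how you perturb on the base. The induction hypothesis therefore does not apply to this tuple, and the argument stalls. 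The paper avoids this entirely: it does \emph{not} induct on $d$. Instead it passes to a projective model $\CX\to\CB$ over $\ZZ$, perturbs by an ample Hermitian bundle $\overline\CA$ on the total space $\CX$ (so that all $\overline\CL_i'=\overline\CL_i+\epsilon\overline\CA$ and $\overline\CM'=\overline\CM+\delta\overline\CA$ are genuinely ample on $\CX$), and applies the Hodge index theorem of \cite{YZ} once, to $\CX/\ZZ$.

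For the equality part your sketch is too thin to carry the conclusion, and it also inherits the bigness obstruction above. Even granting the inequality, the equality case in the paper requires three separate ingredients that your outline does not supply: (i) a treatment of the vertical case ($M$ trivial) via restriction to horizontal one-dimensional subschemes of an open model and a density argument on the Berkovich space $X^\an$ to show the metric descends to $(\Spec K)^\an$; (ii) a Faltings--Hriljac type identity over finitely generated $K$ (Theorem~\ref{hodge2}) expressing $\pi_*(\overline M_0^2)$ as $-2\,\wh\fh(M)$ on curves, which handles the case $n=1$; and (iii) for $n\ge 2$, first deducing that $M$ is numerically trivial from $\overline L_{n-1}\gg 0$ and the geometric Hodge index on $X$, then a variational lemma and a Bertini/Lefschetz hyperplane reduction to lower-dimensional $Y\subset X$. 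The single sentence ``equality at each stage forces $\overline M$ to differ from some $\pi^*\overline H'$ by a numerically trivial class'' does not substitute for any of these steps; in particular it gives no mechanism for showing that the underlying bundle $M$ is torsion, nor for proving that a vertical $\overline M$ with $\pi_*(\overline M^2\cdot\overline L_1\cdots\overline L_{n-1})\equiv 0$ actually lies in $\pi^*\wh\Pic(K)_\intb$.
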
 

When $K$ is a number field, the theorem was proved in \cite[Theorem 3.2]{YZ1}, the pre-sequel of this theorem. The proof of this theorem actually follows the line of that in \cite{YZ1}, replacing adelic line bundles of \cite{Zh2} by those of \cite{YZ2}.

The theorem is generated to finitely generated fields $K$ over a base field $k$ by Carney \cite{Ca1, Ca2}.

\subsection{Preperiodic points of algebraic dynamics}

Let $K$ be a field. 
A \emph{polarized algebraic dynamical system over $K$} is a triple $(X,f,L)$, where
$X$ is a projective variety over $K$,
$f:X\to X$ is a $K$-morphism, and $L\in \Pic(X)_\BQ$ is an ample $\QQ$-line bundle  
 satisfying $f^*L =qL$ from some rational number $q>1$. We call $L$ a \emph{polarization of $f$}.
Denote by $\Prep (f)$ the set of \emph{preperiodic
points}, i.e.,
$$
\Prep(f):=\{x\in X(\overline K)\ |\ f^m(x)=f^n(x) {\rm\ for\ some\ } m,n\in \BN,
\ m\neq n \}.
$$
A well-known result of Fakhruddin \cite{Fak} asserts that $\Prep (f)$ is
Zariski dense in $X$.

Fix a projective variety $X$ over $K$.
Denote by $\DS(X)$ the set of all morphism $f:X\to X$ over $K$ that is \emph{polarizable}; i.e., there is an ample $\QQ$-line bundle $L\in \Pic(X)_\BQ$ such that $(X,f,L)$ forms a polarized dynamical system. 
Note that we do \emph{not} require elements of $\DS(X)$ to be polarizable by the same ample line bundle. 

\begin{thm}\label{dynamicsmain}
Let $X$ be a projective variety over a field $K$ of characteristic 0.
For any $f,g\in \DS(X)$, the following are equivalent:
\begin{itemize}
\item[(1)] $\Prep(f)= \Prep(g)$;
\item[(2)] $g\Prep(f)\subset\Prep(f)$;
\item[(3)] $\Prep(f)\cap \Prep(g)$ is Zariski dense in $X$.
\end{itemize}
\end{thm}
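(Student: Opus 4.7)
The easy implications are $(1)\Rightarrow(2)$ (immediate, since $g\Prep(g)\subset\Prep(g)$) and $(1)\Rightarrow(3)$ (since $\Prep(f)$ is Zariski dense by Fakhruddin \cite{Fak}). For the remaining directions, I first spread out: replace $K$ by a finitely generated subfield over which all of $X$, $f$, $g$, $L_f$, $L_g$ are defined. If $K$ is a finite field, every $\overline K$-point is defined over some $\FF_{p^n}$ and thus has finite orbit under any $K$-endomorphism, so $\Prep(f)=\Prep(g)=X(\overline K)$ and $(1)$ is automatic. Henceforth $K$ is an infinite finitely generated field, so Theorem \ref{hodge} applies.

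The main implication $(3)\Rightarrow(1)$ follows the strategy of \cite{YZ}, now powered by the vector-valued Hodge index theorem of this paper. By Tate's limit in the new category of adelic line bundles, build canonical metrics $\overline L_f,\overline L_g\in\wh\Pic(X)_\nef$ satisfying $f^*\overline L_f=q_f\overline L_f$ and $g^*\overline L_g=q_g\overline L_g$; the ampleness of $L_f, L_g$ combined with the hypothesis that $K$ is infinite yields $\overline L_f,\overline L_g\gg 0$. Put
\[
c := \frac{L_g\cdot L_f^{n-1}}{L_f^n}, \qquad \overline M := \overline L_g - c\,\overline L_f,
\]
so $M\cdot L_f^{n-1}=0$ on $X$ and $\overline M$ is $\overline L_f$-bounded. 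For each $x\in\Prep(f)\cap\Prep(g)$ the canonical heights $h_{\overline L_f}(x)$ and $h_{\overline L_g}(x)$ are numerically trivial in $\wh\Pic(K)_\intb$, hence so is $h_{\overline M}(x)$. Zariski density of the common preperiodic set, together with a Zhang-type successive-minima argument for vector-valued heights applied symmetrically to $\pm\overline M$, yields
\[
\pi_*(\overline M^2\cdot \overline L_f^{n-1}) \equiv 0.
\]

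The equality clause of Theorem \ref{hodge} then forces $\overline M\in\pi^*\wh\Pic(K)_\intb$, i.e.\ $\overline L_g\equiv c\,\overline L_f$ modulo a pullback from $K$. Passing to canonical heights of points, $h_{\overline L_g}(x)$ and $c\,h_{\overline L_f}(x)$ differ by a constant element of $\wh\Pic(K)_\intb$; since the canonical-height-zero loci are precisely the preperiodic sets (via the Northcott-type characterization implicit in the height machinery developed here), we conclude $\Prep(f)=\Prep(g)$. The remaining implication $(2)\Rightarrow(3)$ is then easy: any $x\in\Prep(f)$ is defined over some finite extension $K'/K$, and the $g$-orbit $\{g^n(x)\}$ sits inside the finite set $X(K')\cap\Prep(f)$ (finite by Northcott for $f$-preperiodic points of bounded degree), so $x\in\Prep(g)$, giving $\Prep(f)\subset\Prep(g)$ and hence Zariski density of the intersection.

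The principal obstacle is the density-to-numerical-vanishing step: translating pointwise vanishing of $h_{\overline M}$ on a Zariski-dense set of algebraic points into numerical triviality of the pushforward $\pi_*(\overline M^2\cdot\overline L_f^{n-1})$ in $\wh\Pic(K)_\intb$. This requires a careful extension of Zhang's successive-minima inequality to the vector-valued setting of $\wh\Pic(\cdot)_\intb$ over finitely generated fields, together with the Northcott characterization of preperiodic points in the same framework. Once these ingredients are in place, the Hodge index theorem of this paper closes the argument.
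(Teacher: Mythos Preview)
Your reduction to finitely generated $K$, the finite-field case, and the argument for $(2)\Rightarrow(3)$ are fine and match the paper. The essential gap is in $(3)\Rightarrow(1)$, in two related places.

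First, the ``successive-minima argument applied symmetrically to $\pm\overline M$'' does not work as stated: Theorem~\ref{minima} requires a \emph{nef} adelic line bundle with ample generic fiber, and neither $\overline M=\overline L_g-c\,\overline L_f$ nor $-\overline M$ is nef. The paper instead applies successive minima to the nef sum $\overline N=\overline L_f+\overline M_g$: Zariski density of common preperiodic points gives $h_{\overline N}^{\overline H}(X)=0$ for every $\overline H$ satisfying the Moriwaki condition, hence $(\overline L_f+\overline M_g)^{n+1}\cdot\overline H^d=0$; binomial expansion and nonnegativity of each term yield $\overline L_f^{\,i}\cdot\overline M_g^{\,n+1-i}\cdot\overline H^d=0$ for all $i$, and a separate lemma (Lemma~\ref{moriwaki condition}) upgrades this to $\pi_*(\overline L_f^{\,i}\cdot\overline M_g^{\,n+1-i})\equiv 0$.

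Second, and more fundamentally, your choice $\overline M=\overline L_g-c\,\overline L_f$ with arbitrary polarizations cannot feed into the equality clause of Theorem~\ref{hodge}. The canonical $\overline L_f$ is nef but typically \emph{not} $\gg 0$ (its canonical height vanishes on a dense set), so one must perturb by $\pi^*\overline C$ with $\deg\overline C>0$, as the paper does. That perturbation changes $\pi_*(\overline M^2\cdot(\overline L_f+\pi^*\overline C)^{n-1})$ by terms proportional to $M^2\cdot L_f^{\,n-2}$ on $X$, which is strictly negative by the geometric Hodge index unless $M=L_g-cL_f$ is numerically trivial. For unrelated polarizations this fails (e.g.\ $f=g=[2]$ on an abelian variety with two non-proportional ample symmetric classes), so equality in Theorem~\ref{hodge} is unreachable, and indeed the conclusion $\overline M\in\pi^*\wh\Pic(K)_\intb$ would be false there. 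The paper resolves this by developing the admissible-section theory: the semisimplicity of $f^*$ on $\Pic(X)_\BC$ yields canonical splittings $\ell_f,\ell_g:\NS(X)_\QQ\to\Pic(X)_\QQ$, and one compares $\overline{\ell_f(\xi)}_f$ with $\overline{\ell_g(\xi)}_g$ for the \emph{same} ample class $\xi$, so that the difference is numerically trivial on $X$ and $(\overline L_f-\overline M_g)$ is automatically $(\overline L_f+\overline M_g)$-bounded. Your claim that $\overline M$ is $\overline L_f$-bounded is likewise unjustified in general; the paper's symmetric choice makes boundedness trivial.
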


When $X=\BP^1$, the theorem is independently proved by M. Baker and L. DeMarco
\cite{BD} during the preparation of this paper. 
Their treatment for the number field case is the same as our treatment in the earlier version, while the method for the general case is quite
different.

Similar to Theorem \ref{hodge}, the theorem is treated in \cite{YZ1} if $K$ is a number field, and generalized to any field of positive characteristic by Carney \cite{Ca1, Ca2}.

Some consequences and questions related to the theorem can be found in 
\cite[\S 4.5]{YZ1}. Here we only list the following one, whose proof will be given at the end of this paper. 

\begin{thm}\label{dynamicslocal}
Let $\mathbb K$ be either $\BC$ or $\BC_p$ for some prime number $p$.
Let $X$ be a projective variety over $\mathbb K$, and $f,g\in \DS(X)$ be two polarizable algebraic dynamical systems.
If $\Prep(f)\cap\Prep(g)$ is Zariski dense in $X$, then $d\mu_f=d\mu_g$.
\end{thm}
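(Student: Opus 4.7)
The plan is to combine Theorem~\ref{dynamicsmain} with the equidistribution theorem for small points over a finitely generated field, which is a standard consequence of the arithmetic Hodge index theorem (Theorem~\ref{hodge}) via Yuan's variational method.

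First, Theorem~\ref{dynamicsmain} applied to $(f,g)$ yields $\Prep(f)=\Prep(g)$. All the data $(X,f,g,L_f,L_g)$ together with the defining relations $f^*L_f=q_fL_f$ and $g^*L_g=q_gL_g$ involve only finitely many parameters, so they descend to a finitely generated subfield $K\subset\mathbb K$; the embedding $K\hookrightarrow\mathbb K$ then determines a point $v\in(\Spec K)^{\an}$. Passing to a suitable open model and taking Tate limits produces canonical nef adelic line bundles $\overline L_f,\overline L_g\in\wh\Pic(X)_\nef$, whose $v$-components compute the canonical probability measures $d\mu_f$ and $d\mu_g$ on $X^{\an}_{\mathbb K}$.

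Next, by Fakhruddin's theorem $\Prep(f)$ is Zariski dense in $X$, so we may choose a generic sequence $\{P_m\}\subset\Prep(f)=\Prep(g)$. Both canonical heights vanish: $\hat{h}_{\overline L_f}(P_m)=0=\hat{h}_{\overline L_g}(P_m)$ for all $m$. Applying the equidistribution theorem at the place $v$ separately to $\overline L_f$ and $\overline L_g$, the Galois-averaged Dirac measures along $\{P_m\}$ converge weakly to both $d\mu_f$ and $d\mu_g$, forcing $d\mu_f=d\mu_g$.

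The hard part is establishing the equidistribution theorem over a finitely generated field. Yuan's variational argument proceeds as follows: for a continuous test function $\phi$ on $X^{\an}_v$, construct an $\overline L_f$-bounded adelic line bundle $\overline M_\phi\in\wh\Pic(X)_{\intb}$ with trivial underlying line bundle whose $v$-component is determined by $\phi$ and whose other components are trivial; perturb $\overline L_f$ to $\overline L_f+\epsilon\overline M_\phi$ and compare the arithmetic top self-intersection $\pi_*((\overline L_f+\epsilon\overline M_\phi)^{n+1})$ with the essential minimum of the perturbed height along $\{P_m\}$ via Zhang's successive-minima inequality, then expand to first order in $\epsilon$. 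The key difficulty is verifying the positivity hypotheses of Definition~\ref{def-positivity}---producing an auxiliary positive-degree adelic line bundle pulled back from a transcendence-one subfield of $K$ to establish $\overline L_f\gg 0$, and confirming that $\overline M_\phi$ is $\overline L_f$-bounded---so that Theorem~\ref{hodge} may be invoked to force $\int\phi\,d\mu_f$ to equal the limit of empirical averages.
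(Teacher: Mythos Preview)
Your approach differs substantially from the paper's, and it has a genuine gap.

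The paper does not use equidistribution at all. Instead it invokes the refined Theorem~\ref{dynamicsrefine}, whose condition~(4) asserts that the admissible sections coincide: $\wh\ell_f=\wh\ell_g$ as maps $\NS(X)_\QQ\to\wh\Pic(X)_{\intb}$. This is an equality of \emph{global} adelic metrics on $X^{\an}$. The embedding $K\hookrightarrow\mathbb K$ determines a point $\eta^{\an}\in(\Spec K)^{\an}$, and the fiber $X^{\an}_{\eta^{\an}}$ is canonically $X^{\Ber}$; restricting the metric identity to this fiber gives $\wh\ell_{f,\mathbb K}=\wh\ell_{g,\mathbb K}$, and hence $c_1(\overline M_f)^n=c_1(\overline M_g)^n$ pointwise on $X^{\Ber}$, which is $d\mu_f=d\mu_g$. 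No limiting sequence of points, no variational perturbation.

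The gap in your argument is the construction of $\overline M_\phi$. You want an integrable adelic line bundle ``whose $v$-component is determined by $\phi$ and whose other components are trivial.'' In the number field case this works because the places of $K$ form a discrete set and one may modify the metric at a single $v$ independently. Over a finitely generated field of positive transcendence degree, however, $v$ is a point of the connected Berkovich space $(\Spec K)^{\an}_w$ (for $w$ the place of $\QQ$ under $\mathbb K$), and elements of $\wh\Pic(X)_{\intb}$ correspond to \emph{continuous} metrics on $X^{\an}$. There is no nonzero continuous function supported at the single point $v$, so you cannot isolate the contribution of $\phi$ to the place $v$ alone. Any attempt to perturb in a neighborhood of $v$ will produce, via the successive-minima inequality, an integrated version of the limit measure over that neighborhood rather than the value $\int\phi\, d\mu_f$ at $v$ itself. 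This is precisely the ``many convergence problems of integrations'' the introduction warns about in connection with the equidistribution route; the paper's metric-restriction argument sidesteps the issue entirely by exploiting the stronger conclusion~(4) rather than just $\Prep(f)=\Prep(g)$.
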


Here $d\mu_f$ denotes the equilibrium measure of $(X, f)$ on the Berkovich space $X^\Ber$ associated to $X$. It can
be obtained from any initial ``smooth'' measure on $X^\Ber$ by Tate's limiting
argument. By a proper interpretation, it satisfies $f^*d\mu_f=q^{\dim X}d\mu_f$ and $f_*d\mu_f=d\mu_f$.

The proof of Theorem \ref{dynamicsmain} follows the idea in \cite{YZ1}, obtained as a consequence of Theorem \ref{hodge}. 
The Lefschetz principle allows us to assume that the base field $K$ is a finitely generated field over $\QQ$. 
Then the problem sits in the framework of adelic line bundles and height theory over finitely generated fields in \cite{YZ2}.

As in an earlier draft of our paper, one may prove Theorem \ref{dynamicsmain}
without using the new theory of adelic line bundles over finitely generated fields in \cite{YZ2}. 
The proof combines the height theory of \cite{Mo3,Mo4}, the equidistribution idea of \cite{SUZ} and \cite{Yu}, and the results of \cite{YZ1}. But the proof is very tricky and very technical due to many convergence problems of integrations and intersection numbers.

\subsubsection*{Acknowlegement}

The authors would like to thank Xander Faber, Dragos Ghioca, Walter Gubler,
Yunping Jiang,
Barry Mazur, Thomas Tucker, Yuefei Wang, Chenyang Xu, Shing-Tung Yau, Yuan Yuan, Zhiwei Yun
and Wei Zhang for many helpful discussions during the preparation of
this paper.

During the preparation of the paper, the first author was supported by the Clay Research Fellowship and the NSF award DMS-1330987 of the USA, and the
second author was supported by the NSF awards DMS-0970100, DMS-1065839,  and DMS-1700883 of the USA.

\section{Arithmetic Hodge index theorem}

The goal of this section is to prove Theorem \ref{hodge}. 
We will start with a review of the notion of 
adelic $\QQ$-line bundles in \cite{YZ2}, and then prove the theorem following the idea of \cite{YZ1}.

\subsection{Review on adelic line bundles} \label{sec adelic}

Let us first recall the notion of adelic divisors and adelic line bundles over finitely generated fields over $\QQ$ in \cite[\S2.4,\S2.5]{YZ2}. We will be concerned with only adelic $\QQ$-line bundles (instead of integral adelic line bundles), which is slightly simpler without treating mixed coefficients. 
We only need the arithmetic case ($k=\ZZ$), which also avoids the uniform terminology of \cite[\S1.6]{YZ2}.

\subsubsection*{Adelic divisors over a quasi-projective arithmetic variety}

Let us review the notion of adelic $\QQ$-divisors, as a slight variant of that in  \cite[\S2.4]{YZ2}.

By a \emph{quasi-projective arithmetic variety} (resp. \emph{projective arithmetic variety}), we mean an integral scheme which is flat and quasi-projective (resp. projective) over $\ZZ$. 
For a quasi-projective arithmetic variety $\CU$, a \emph{projective model} means a projective arithmetic variety $\CX$ endowed with an open immersion $\CU\to \CX$. 

Let $\CU$ be a quasi-projective arithmetic variety, and $\CX$ be a projective model of $\CU$. 
Denote by $\wh\Div(\CX)$ the group of arithmetic divisors over $\CX$, and by $\wh\Pr(\CX)$ the subgroup of principal arithmetic divisors over $\CX$.
Here the Green's functions are assumed to be \emph{continuous} as in 
\cite[\S2.1]{YZ2}.
Projective models $\CX$ of $\CU$ form an inverse system. 
Using pull-back maps, define 
$$\wh\Div (\CU)_{\rmod,\QQ}:=\lim_{\substack{\lra\\ \CX}}\wh\Div(\CX)_\QQ,
\qquad \wh\Pr (\CU)_{\rmod,\QQ}:=\lim_{\substack{\lra\\ \CX}}\wh\Pr(\CX)_\QQ.$$

An element of $\wh\Div (\CU)_{\rmod,\QQ}$ is called \emph{effective} if it is the image of an effective arithmetic divisor of some $\wh\Div(\CX)_\QQ$.
For any $\overline \CD, \overline \CE \in \wh\Div (\CU)_{\rmod,\QQ}$, we write 
$\overline \CD\geq \overline \CE$ or $\overline \CE\leq \overline \CD$ if $\overline \CD-\overline \CE$ is effective in $\wh\Div (\CU)_{\rmod,\QQ}$.
This give a partial order in $\wh\Div (\CU)_{\rmod,\QQ}$.

Fix a \emph{boundary divisor} $(\CX_0,\CDD_0)$ of $\CU$, i.e,  a projective model $\CX_0$ of $\CU$ and a strictly effective arithmetic divisor $\CDD_0$ over $\CX_0$ such that the support of the finite part $\CD_0$ is exactly $\CX_0\setminus \CU$.
We have a \emph{boundary norm} 
$$\|\cdot\|_{\CDD_0}:\wh\Div (\CU)_{\rmod,\QQ}
\lra [0,\infty]$$
by 
$$
\|\CEE\|_{\CDD_0}:=\inf\{\epsilon\in \BQ_{>0}: \ 
 -\epsilon \CDD_0 \leq
\CEE \leq  \epsilon \CDD_0\}.
$$
Here the inequalities are defined in terms of effectivity. 
It further induces a \emph{boundary topology} over $\wh\Div (\CU)_{\rmod}$, which does not depend on the choice of $(\CX_0,\overline\CD_0)$.

Let $\wh \Div  (\CU)_{\QQ}$ be the \emph{completion} of $\wh \Div  (\CU)_{\rmod,\QQ}$ with respect to the boundary topology. 
By definition, an element of $\wh \Div  (\CU)_{\QQ}$ is represented by a Cauchy sequence in $\wh \Div  (\CU)_{\rmod,\QQ}$, i.e., a sequence $\{\CEE_i\}_{i\geq 1}$ in $\wh \Div  (\CU)_{\rmod,\QQ}$ satisfying the property that there is a sequence $\{\epsilon_i\}_{i\geq 1}$ of positive rational numbers converging to $0$ such that 
$$
 -\epsilon_i \overline\CD_0 \leq
\overline \CE_i-\CEE_{i'} \leq  \epsilon_i \overline \CD_0,\quad\ i'\geq i\geq 1.
$$
The sequence $\{\CEE_i\}_{i\geq 1}$ represents $0$ in $\wh \Div  (\CU)_{\QQ}$ if and only if there is a sequence $\{\delta_i\}_{i\geq 1}$ of positive rational numbers converging to $0$ such that 
$$
 -\delta_i \overline\CD_0 \leq
 \CEE_i \leq  \delta_i \overline \CD_0,\quad\ i\geq 1.
$$

An element of $\wh \Div  (\CU)_{\QQ}$ is called an \emph{adelic $\QQ$-divisor} over $\CU$.
Define the \emph{class group} of $\CU$ to be 
$$\wh\CaCl (\CU)_{\QQ}:=\wh\Div(\CU)_{\QQ}/\wh \Pr (\CU)_{\rmod,\QQ}.$$

\subsubsection*{Category of $\QQ$-line bundles}

As a convention, categories of various line bundles (or $\QQ$-line bundles) are defined to be groupoids; i.e., the morphisms in them are defined to be isomorphisms (or isometries) of the line bundles. 

To illustrate the idea, take $\wh\Picc(\CX)$
and $\wh\Picc(\CX)_\QQ$ for a projective arithmetic variety $\CX$ for example. 
Then an object of $\wh\Picc(\CX)$ is a hermitian line bundle over $\CX$ (with a continuous metric), and a morphism of two such objects is defined to be
an isometry of hermitian line bundles. 
An object of $\wh\Picc(\CX)_\QQ$ is a pair $(a,\CL)$ (or just written as $a\CL$)
with $a\in \QQ$ and $\CL\in\wh\Picc(\CX)$, and a morphism of two such objects is defined to be
$$\Hom(a\CL,a'\CL')=\varinjlim_m \Hom(am\CL, a'm\CL'),$$
where $m$ runs through positive integers such that $am$ and $a'm$ are both integers, so that $am\CL$ and $a'm\CL'$ are viewed as objects of $\wh\Picc(\CX)$, 
and ``$\Hom$'' on the right-hand side are viewed in $\wh\Picc(\CX)$. 

We refer to \cite[\S2.1,\S2.2]{YZ2} for more details on these categories.

\subsubsection*{Adelic line bundles over a quasi-projective arithmetic variety}

Now we review the notion of adelic $\QQ$-line bundles, as a slight variant of that in \cite[\S2.5]{YZ2}.

Let $\CU$ be a quasi-projective arithmetic variety.
Let $\CX$ be a projective model of $\CU$ over $k$.
In the spirit of \cite[\S2.2]{YZ2}, 
let $\wh\Picc(\CX)$ be the category of hermitian line bundles over $\CX$, 
and $\wh\Picc(\CX)_\QQ$ be the category of hermitian $\QQ$-line bundles over $\CX$. 
In the arithmetic case (that $k=\ZZ$), $\wh\Picc(\CX)$ is the category of hermitian line bundles with continuous metrics over $\CX$. 
In the geometric case (that $k$ is a field), $\wh\Picc(\CX)$ means the usual 
$\Picc(\CX)$.

Define the \emph{category of adelic $\QQ$-line bundles} $\wh\Picc (\CU)_\QQ$ over $\CU$ as follows.
An object of $\wh\Picc (\CU)_\QQ$ is a pair
$(\CL, (\CX_i,\overline \CL_i, \ell_{i})_{i\geq 1})$ where:
\begin{enumerate}[(1)]
\item $\CL$ is an object of $\Picc(\CU)_\QQ$, i.e., a $\QQ$-line bundle over $\CU$;

\item  $\CX_i$ is a projective model of $\CU$ over $\ZZ$;

\item  $\overline \CL_i$ is an object of $\wh\Picc(\CX_i)_\QQ$, i.e. a hermitian $\QQ$-line bundle over $\CX_i$;

\item $\ell_i:\CL\to \CL_i|_{\CU}$ is an isomorphism in $\Picc(\CU)_\QQ$.
\end{enumerate}
The sequence is required to satisfy the \emph{Cauchy condition} that
the sequence $\{\wh \div(\ell_i \ell_1^{-1})\}_{i\geq 1}$ is a 
Cauchy sequence in $\wh\Div(\CU)_{\rmod,\QQ}$ under the boundary topology.

A morphism from an object $(\CL, (\CX_i,\overline \CL_i, \ell_{i})_{i\geq 1})$ of $\wh\Picc (\CU)_\QQ$ to another object
$(\CL',(\CX_i',\overline \CL_i', \ell_{i}')_{i\geq 1})$ of $\wh\Picc (\CU)_\QQ$ is an isomorphism  $\iota:\CL\to \CL'$ of $\QQ$-line bundles over $\CU$ such that the sequence 
$\{\wh\div(\ell_{i}'\ell_1'^{-1})-\wh\div(\ell_{i}\ell_1^{-1})+\wh\div(\iota_1) \}_{i\geq1}$
of $\wh \Div (\CU)_{\rmod,\QQ}$ converges to 0 in $\wh \Div (\CU)_\QQ$
under the boundary topology.
Here $\iota_1:\overline \CL_1\dashrightarrow \overline \CL_1'$ denotes the rational map over $\CU$ induced by $\iota$, which induces an element $\wh\div(\iota_1)$
of $\wh \Div (\CU)_{\rmod,\QQ}$.

An object of $\wh\Picc (\CU)_{\QQ}$ is called an \emph{adelic $\QQ$-line bundle} over $\CU$.
Define $\wh\Pic (\CU)_{\QQ}$ to be the \emph{group} of isomorphism classes of objects of $\wh\Picc (\CU)_{\QQ}$. 
There is a canonical isomorphism
$$\wh\CaCl(\CU)_{\QQ}\lra \wh\Pic(\CU)_{\QQ}.$$

An adelic $\QQ$-line bundle over $\CU$ is called \emph{strongly nef} if it is isomorphic to a Cauchy sequence
$(\CL, (\CX_i,\overline \CL_i, \ell_{i})_{i\geq 1})$ such that $\CLL_i$ is nef over $\CX_i$ for all $i\geq1$.
An adelic $\QQ$-line bundle $\CLL$ over $\CU$ is called \emph{nef} if there exists a strongly nef adelic $\QQ$-line bundle $\CMM$ over $\CU$ such that $\CLL+\epsilon\CMM$ is strongly nef for all positive rational numbers $\epsilon$.
An adelic $\QQ$-line bundle over $\CU$ is called \emph{integrable} if it is isometric to 
$\CLL_1- \CLL_2$ for two {strongly nef} adelic $\QQ$-line bundle $\CLL_1$ and $\CLL_2$ over $\CU$.

\subsubsection*{Adelic line bundles over finitely generated fields}

Let $K$ be a finitely generated field over $\QQ$ of transcendental degree $d$.
Let $X$ be a quasi-projective variety over $K$ of dimension $n$.
By an \emph{quasi-projective arithmetic model} of $X$, we mean a morphism $i:X\to \CU$ to a quasi-projective arithmetic variety $\CU$ which is injective between the underlying spaces and induces isomorphisms between the local rings.

Define
$$\wh \Picc (X)_{\QQ}:=\lim _{\substack{\lra \\ \CU}}\wh \Picc (\CU)_{\QQ},$$
$$\wh \Pic (X)_{\QQ}:=\lim _{\substack{\lra \\ \CU}}\wh \Pic (\CU)_{\QQ}.$$
An object of $\wh \Picc (X)_{\QQ}$ is called an \emph{adelic $\QQ$-line bundle over $X$}.

There are canonical forgetful maps 
$$
\wh\Pic(X)_{\QQ} \lra \Pic(X)_{\QQ}, \quad\ 
\wh\Picc(X)_{\QQ} \lra \Picc(X)_{\QQ}.
$$ 
These are induced by the forgetful map
$$
\wh\Picc(\CU)_{\QQ} \lra \Picc(\CU)_{\QQ},\quad 
(\CL, (\CX_i,\overline \CL_i, \ell_{i})_{i\geq 1})\longmapsto \CL.
$$ 
As a convention, we usually write an object of $\wh\Picc(X)_{\QQ}$ in the form $\overline L$, where $L$ is understood to be the image of $\overline L$ in $\Picc(X)_\QQ$.
We often refer $L$ as the \emph{underlying $\QQ$-line bundle of} $\overline L$, and refer $\overline L$ as an \emph{arithmetic extension of} $L$.

\subsubsection*{Simplified notations}

As mentioned at the end of \cite[\S2.5]{YZ2}, the groups $\wh\Div(\cdot)_{\QQ}$, 
$\wh\Pic(\cdot)_{\QQ}$ and the category $\wh\Picc(\cdot)_{\QQ}$ are the base change to $\QQ$ of their integral versions. 

As this paper only concerns with $\wh \Pic (\cdot)_{\QQ}$, so we will abbreviate 
$$
\wh \Div (\CU)=\wh \Div (\CU)_{\QQ}, \qquad
\wh \Pic (\CU)=\wh \Pic (\CU)_{\QQ}, \qquad
 \wh \Picc (\CU)=\wh \Picc (\CU)_{\QQ},
$$
and 
$$
\wh \Div (X)=\wh \Div (X)_{\QQ}, \qquad
\wh \Pic (X)=\wh \Pic (X)_{\QQ}, \qquad \wh \Picc (X)=\wh \Picc (X)_{\QQ}.
$$
For convenience, we may call an (adelic) $\QQ$-line bundle simply an (adelic) line bundle.

By the direct limit, we have notions of \emph{nef} adelic $\QQ$-line bundles and \emph{integrable} adelic $\QQ$-line bundles over $X$. 
These gives a sub-semigroup $\wh \Pic (X)_\nef$ and a subgroup $\wh \Pic(X)_\intb$ of 
$\wh \Pic (X)$.

The definitions also work for $X=\Spec K$. 
We will simply write $\wh \Pic (\Spec K)$ as $\wh \Pic (K)$. 
Write $\wh \Pic (\Spec K)_\nef$ and $\wh \Pic(\Spec K)_\intb$ similarly.

\subsubsection*{Intersection theory}
By \cite[\S4.1]{YZ2},
there is an (absolute) intersection product
$$\wh\Pic (K)_\intb^{d+1}\lra \BR, \qquad  (\overline H_1, \cdots, \overline H_{d+1})\longmapsto\overline H_1 \cdots \overline H_{d+1},$$
and a relative intersection product
$$\wh \Pic (X)_\intb^{n+1}\lra \wh\Pic (K)_\intb, \qquad 
(\overline L_1, \cdots, \overline L_{n+1})\longmapsto \pi_*( \overline L_1 \cdots \overline L_{n+1}).
$$
The relative intersection product is defined as the limit of the Deligne pairing.

\subsection{The inequality}

We first deduce the inequality of Theorem \ref{hodge} from that of \cite[Thm. 3.2]{YZ1}. 
The goal is to prove 
$$\pi_*(\overline M^2\cdot \overline L_1\cdots \overline L_{n-1}) \preceq 0$$
in $\wh \Pic(K)_\intb$ under the assumption
$$M\cdot L_1\cdots L_{n-1}=0$$
over $X$. 
Here $\overline M$ is integrable over $X$, $\overline L_1, \cdots, \overline L_{n-1}$ are nef over $X$, and the underlying line bundles
$ L_1, \cdots,  L_{n-1}$ are big over $X$. 

Our first step is to reduce it to the case that  $\overline L_1, \cdots, \overline L_{n-1}$ are strongly nef over $X$.
This follows from the idea at the beginning of \cite[\S 3.3]{YZ1}. 
In fact, let $\overline A$ be a nef and big line bundle over $X$ such that $\overline L_i'=\overline L_i+\epsilon \overline A$ is strongly nef for all rational numbers $\epsilon>0$ and for all $i=1,\cdots, n-1$.
Set $\overline M'=\overline M+\delta \overline A$ with underlying line bundle $M'=M+\delta A$.
Here $\delta=\delta(\epsilon)$ is a number such that 
$$
M'\cdot L_1'\cdots L_{n-1}'=(M+\delta A)\cdot L_{1}'\cdots L_{n-1}'=0.
$$
It determines
$$\delta= -\frac{M\cdot L_{1}'\cdots L_{n-1}'}{A\cdot L_{1}'\cdots L_{n-1}'}.$$
As $\epsilon\to 0$, we have $\delta\to 0$ since
$$
M\cdot L_{1}'\cdots L_{n-1}'\lra 
M\cdot L_{1}\cdots L_{n-1} =0
$$
and 
$$
A\cdot L_{1}'\cdots L_{n-1}'\lra
A\cdot L_{1}\cdots L_{n-1}\\
>0.
$$
The last inequality uses the assumption that $L_1,\cdots, L_{n-1}, A$ are 
big and nef over $X$.

Therefore, the inequality for $(\overline M, \overline L_1,\cdots, \overline L_{n-1})$ is implied by that for $(\overline M', \overline L_1',\cdots, \overline L_{n-1}')$.
Thus we can assume that  $\overline L_1, \cdots, \overline L_{n-1}$ are strongly nef over $X$ in the following.

As our second step, by approximation, it suffices to prove the following assertion.

\emph{
Let $\pi:\CX\to \CB$ be a projective and flat morphism of projective arithmetic varieties. 
Write $\dim \CB=d+1$ and $\dim \CX=n+d+1$. 
Let $\CMM$ be a hermitian line bundle on $\CX$,
$(\CLL_1,\cdots, \CLL_{n-1})$ be nef hermitian line bundles on $\CX$ with big generic fibers on $X$,
and $(\CHH_1,\cdots, \CHH_{d})$ be nef hermitian line bundle on $\CB$.   
Assume that the generic fiber $\CL_{i,\eta}$ is big on the generic fiber $\CX_\eta$ of $\CX$ above the generic point $\eta$ of $\CB$ for every $i=1, \cdots, n-1$.
If
$$\CM_\eta\cdot \CL_{1,\eta}\cdots \CL_{n-1,\eta}=0,$$
then 
$$
\CMM^2\cdot \CLL_1\cdots \CLL_{n-1}\cdot
\pi^* \CHH_1 \cdots\pi^* \CHH_{d}\leq 0.
$$
}

Our third step is to apply the above $(\epsilon,\delta)$-trick again to get stronger positivity. 
In fact, we can assume that each $\CHH_j$ is ample  on $\CB$ since nef line bundles are limits of ample line bundles. 
For simplicity, denote $\CLL_{n-1+j}=\pi^*\CHH_j$ for $j=1,\cdots, d$. 
Fix an ample hermitian line bundle $\CAA$ on $\CX$. 
Take a small rational number $\epsilon>0$. 
Set $\CMM'=\CMM+\delta \CAA$
and $\CLL_i'= \CLL_i+\epsilon \CAA$ for $i=1,\cdots d+n-1$. 
Here $\delta=\delta(\epsilon)$ is a number such that 
$$
\CM_\QQ'\cdot \CL_{1,\QQ}'\cdots \CL_{d+n-1,\QQ}'=(\CM_\QQ+\delta \CA_\QQ)\cdot \CL_{1,\QQ}'\cdots \CL_{d+n-1,\QQ}'=0.
$$
It determines
$$\delta= -\frac{\CM_\QQ\cdot \CL_{1,\QQ}'\cdots \CL_{d+n-1,\QQ}'}{\CA_\QQ\cdot \CL_{1,\QQ}'\cdots \CL_{d+n-1,\QQ}'}.$$
As $\epsilon\to 0$, we have $\delta\to 0$ since
\begin{multline*}
\CM_\QQ\cdot \CL_{1,\QQ}'\cdots \CL_{d+n-1,\QQ}'\lra 
\CM_\QQ\cdot \CL_{1,\QQ}\cdots \CL_{d+n-1,\QQ} \\
=(\CM_\eta\cdot \CL_{1,\eta}\cdots \CL_{n-1,\eta})(\CH_{1,\QQ}\cdots \CH_{d,\QQ})
=0
\end{multline*}
and 
\begin{multline*}
\CA_\QQ\cdot \CL_{1,\QQ}'\cdots \CL_{d+n-1,\QQ}'\lra
\CA_\QQ\cdot \CL_{1,\QQ}\cdots \CL_{d+n-1,\QQ} \\
=(\CA_\eta\cdot \CL_{1,\eta}\cdots \CL_{n-1,\eta})(\CH_{1,\QQ}\cdots \CH_{d,\QQ})
>0.
\end{multline*}
The last inequality uses the assumption that $\CL_{i,\eta}$ is 
big and nef for each $i$.

Applying \cite[Thm. 3.2]{YZ1} to the arithmetic variety $\CX$ over $\ZZ$, 
we have
$$\CMM'^2\cdot\CLL_{1}'\cdots \CLL_{d+n-1}'\leq 0.$$
Set $\epsilon\to 0$. We have
$$\CMM^2\cdot\CLL_{1}\cdots \CLL_{d+n-1}\leq 0.$$
It proves the result.

\subsection{Equality: vertical case}

An adelic line bundle $\overline L\in \wh\Pic(X)_\intb$ is called \emph{vertical} if the underlying line bundle $L$ is isomorphic to the trivial line bundle $\CO_X$. Denote by $\wh\Pic(X)_\vert$ the group of vertical adelic line bundles on $X$.

Now we prove the equality part of the theorem in the vertical case.
Recall that:
\begin{itemize}
\item $K$ is a finitely generated field over $\QQ$ of transcendental degree $d\geq 0$;
\item $X$ is a normal projective variety of dimension $n\geq 1$ over $K$;
\item $\overline M\in \wh\Pic(X)_\vert$ is vertical;
\item $\overline L_1, \cdots, \overline L_{n-1}\in \wh\Pic(X)_\intb$
with $\overline L_i\gg 0$;
\item  $\overline M$ is $\overline L_i$-bounded for each $i$;
\item The equality 
$$\pi_*(\overline M^2\cdot \overline L_1\cdots \overline L_{n-1})\equiv 0$$
holds in $\wh \Pic(K)_\intb$.
\end{itemize}
We need to prove $\overline M\in \pi^*\wh\Pic(K)_\intb$.

By \cite[Lem. 2.3.3]{YZ2}, there is a quasi-projective arithmetic model $\CU\to \CV$ of $X\to \Spec K$, i.e., a projective and flat morphism of quasi-projective arithmetic varieties with generic fiber $X\to \Spec K$, such that 
$$\overline M, \overline L_1, \cdots, \overline L_{n-1}\in \wh\Pic(\CU)_\intb.$$
For any horizontal closed integral subscheme $\CW$ of $\CV$ of dimension $e+1$, 
we get a projective and flat morphism $\CU_\CW\to \CW$, and it defines the groups $\wh\Pic(\CU_\CW)_\intb$ and $\wh\Pic(\CW)_\intb$. 
There are natural pull-back maps 
$$
\wh\Pic(\CU)_\intb \to \wh\Pic(\CU_\CW)_\intb, \quad
\wh\Pic(\CV)_\intb \to \wh\Pic(\CW)_\intb.
$$
We first prove the following result. 
\begin{lem}
For any $\overline H_1, \cdots, \overline H_e\in \wh\Pic(\CV)_\intb$, one has
$$
(\overline M|_{\CU_\CW})^2\cdot (\overline L_1|_{\CU_\CW})\cdots (\overline L_{n-1}|_{\CU_\CW})\cdot 
(\overline H_1|_{\CW})\cdots (\overline H_{e}|_{\CW} )=0. 
$$
\end{lem}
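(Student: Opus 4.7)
The plan is to reduce the assertion, by induction on the codimension $c := d - e$ of $\CW$ in $\CV$, to the numerical triviality hypothesis
\[
\pi_*(\overline M^2 \cdot \overline L_1 \cdots \overline L_{n-1}) \equiv 0 \qquad \text{in } \wh \Pic(K)_\intb.
\]

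For the base case $c = 0$ one has $\CW = \CV$, and the LHS of the lemma equals
\[
\pi_*(\overline M^2 \cdot \overline L_1 \cdots \overline L_{n-1}) \cdot \overline H_1 \cdots \overline H_d
\]
by the projection formula for the relative intersection product; this vanishes by the hypothesis, since each $\overline H_j \in \wh\Pic(\CV)_\intb$ gives an element of $\wh\Pic(K)_\intb$ via the natural pull-back map.

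For the inductive step ($c \geq 1$), we choose a horizontal closed integral subscheme $\CW^+ \subset \CV$ of dimension $e+2$ containing $\CW$, together with projective closures $\ov\CV$ and $\ov\CX \to \ov\CV$ extending $\CV$ and $\CU \to \CV$ respectively. On (a suitable modification of) $\ov{\CW^+}$, the subvariety $\ov\CW$ becomes a Cartier divisor, which we realize---modulo vertical components and archimedean Green's-current corrections---as the restriction of an integrable adelic line bundle $\overline \CN \in \wh\Pic(\ov\CV)_\intb$. Concretely,
\[
\overline \CN|_{\ov{\CW^+}} \sim [\ov\CW] + \sum_\alpha n_\alpha [\ov\CW_\alpha]
\]
for finitely many other horizontal integral subvarieties $\CW_\alpha$ of dimension $e+1$. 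Applying the inductive hypothesis to $\CW^+$ (codimension $c-1$) with the $e+1$ test line bundles $(\overline H_1, \ldots, \overline H_e, \overline \CN)$ yields a linear relation
\[
A_\CW + \sum_\alpha n_\alpha A_{\CW_\alpha} = 0,
\]
where $A_{\CW'}$ denotes the LHS of the lemma with $\CW$ replaced by $\CW'$.

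To extract the individual vanishing $A_\CW = 0$, we run a secondary Noetherian induction on the degree of $\ov\CW$ with respect to a fixed polarization of $\ov\CV$, varying the cutting divisor $\overline \CN$ to isolate each $\CW_\alpha$ in turn. The hard part will be the clean realization of $[\ov\CW]$ as the Cartier divisor of $\overline \CN|_{\ov{\CW^+}}$: one needs to control both the extraneous horizontal components $\CW_\alpha$ and the archimedean Green's-current terms. We expect the required blow-up procedure to be analogous to the Raynaud flattening invoked in the proof of Proposition \ref{intersection2}, combined with a sufficiently rich supply of Cartier divisors on $\ov\CV$ that restrict to prescribed Weil divisors on $\ov{\CW^+}$.
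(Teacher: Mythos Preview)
Your inductive framework (descend one codimension at a time by cutting $\CW^+$ with a divisor whose support contains $\overline\CW$) is exactly the paper's, but your proposed mechanism for isolating the single component $\CW$ from the other horizontal pieces $\CW_\alpha$ is where the argument goes astray. The ``secondary Noetherian induction on degree'' is not set up to terminate: any new cutting divisor through $\CW_\alpha$ will introduce further horizontal components of its own, with no a priori bound on their degrees, so you are not reducing a well-founded invariant.

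The missing idea is a \emph{sign} argument. Write $\CI = \overline M^2\cdot \overline L_1\cdots \overline L_{n-1}\cdot \overline H_1\cdots \overline H_{e}$ (pushed down to $\CW^+$). First, since by the inductive hypothesis $\CI\cdot\overline H = 0$ for \emph{every} $\overline H\in\wh\Pic(\CV)_\intb$, in particular $\CI$ kills every vertical class; this disposes at once of both the extraneous vertical components of $\div(s)$ and the archimedean Green's-current term (which is a limit of intersections with vertical classes). You therefore get
\[
0=\CI\cdot\overline H_d=\sum_i a_i\,(\CI\cdot\mathcal C_i),\qquad a_i>0,
\]
with $\mathcal C_0=\overline\CW$ and all $\mathcal C_i$ horizontal. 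Now apply the \emph{inequality} part of Theorem~\ref{hodge} (already established) on each $\mathcal C_i$: it gives $\CI\cdot\mathcal C_i\le 0$ for every $i$. A sum of nonpositive terms equal to zero forces each term to vanish, so $\CI\cdot\overline\CW=0$ directly, with no further induction needed.
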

\begin{proof}
By induction, we can assume that $\CW$ has codimension one in $\CV$.
We need to prove
$$
(\overline M|_{\CU_\CW})^2\cdot (\overline L_1|_{\CU_\CW})\cdots (\overline L_{n-1}|_{\CU_\CW})\cdot 
(\overline H_1|_{\CW})\cdots (\overline H_{d-1}|_{\CW} )=0. 
$$
By approximation, we can assume that there is a projective model $\CX\to \CB$
 of $\CU\to \CV$ such that $\overline H_i\in \wh\Pic(\CB)$ for every $i=1,\cdots, d-1$.
Denote by $\mathcal C$ the Zariski closure of $\CW$ in $\CB$. 
Then $\CX_{\mathcal C}\to \mathcal C$ is a projective model of $\CU_\CW\to \CW$. 

By assumption, for any $\overline H_d\in \wh\Pic(\CB)$, we have $\CI\cdot \overline H_d=0.$
Here we denote
$$\CI:= \overline M^2\cdot \overline L_1\cdots \overline L_{n-1}\cdot \overline H_1\cdots \overline H_{d-1},$$
which is a formal notation to ease the burden of the notations and has concrete meanings when intersecting it with other line bundles (or 1-cycles).
Then the intersection of $\CI$ with any vertical class of $\CB$ is zero. 

Now assume that the finite part $\CH_d$ of $\overline H_d$ is ample on $\CB_\QQ$. 
After replacing $\overline H_d$ by a multiple if necessary, we can assume that there is a section $s$ of $\CH_d$ vanishing on $\mathcal C$.
It follows that we can write 
$$
\div(s)=\sum_{i=0}^r a_i \mathcal C_i, \quad a_i\geq 0. 
$$ 
Here $\mathcal C_0=\mathcal C$ and $a_0>0$. 
By definition of intersection numbers, 
$$
\CI\cdot \overline H_d= \sum_{i=0}^r a_i \CI\cdot \mathcal C_i
-\int_{\CB(\CC)} \log\|s\| \omega_{\CI}. 
$$
Here the integral is a formal intersection of $\log\|s\|$ with $\CI$, which is zero since $\CI$ has zero intersection with any vertical class. 
Furthermore, $ \CI\cdot \mathcal C_i=0$ if $\mathcal C_i$ is vertical, 
and $ \CI\cdot \mathcal C_i\leq0$ by the inequality part of Theorem \ref{hodge}. 
Hence, $\CI\cdot \overline H_d=0$ forces
$\CI\cdot \mathcal C=0.$
It is exactly the equality that we need to prove. 
\end{proof}

Set $\dim \CW=1$ in the lemma. 
Then the function field of $\CW$ is a number field. 
Apply the main theorem of \cite{YZ1}, we conclude that 
$$\overline M|_{\CU_\CW}\in \pi^*\wh\Pic(\CW)_\intb.$$
To imply $\overline M\in \pi^*\wh\Pic(K)_\intb$, 
we first re-interpret it in terms of Berkovich spaces.

By \cite[Proposition 3.5.1]{YZ2}, we have canonical injections 
$$\wh\Pic(X)\hookrightarrow \wh\Pic(X^\ran)_{\QQ}, 
\quad \wh\Pic(K)\hookrightarrow \wh\Pic((\Spec K)^\ran)_{\QQ}.$$
Here $X^\ran$ is the disjoint union, over all places $v$ of $\QQ$, of the Berkovich space $X_v^\an$ associated to the scheme 
$X_{\QQ_v}=X\times_{\QQ}\QQ_v$ over the complete field $\QQ_v$.

We claim that  $\overline M\in \pi^*\wh\Pic(K)_\intb$ is equivalent to  
$\overline M\in \pi^*\wh\Pic((\Spec K)^\ran)_{\QQ}$. 
In fact, assume the later. 
If there is a rational point $s\in X(K)$, then we would have 
$\overline M= \pi^* \overline M_0$ where 
$\overline M_0=s^* \overline M$ lies in $\wh\Pic(K)_\intb$. 
The identity can be checked in $\wh\Pic(X^\ran)$. 
In general, taking any $x\in X(\overline K)$, we have 
$\overline M=\pi^* \fh_{\overline M}(x)$ with $\fh_{\overline M}(x)\in\wh\Pic(K)_\intb$.

Hence, it suffices to prove 
$\overline M\in \pi^*\wh\Pic((\Spec K)^\ran)_{\QQ}$. 
Fixing an isomorphism from $M$ to the trivial sheaf $\CO_X$, the metric of $\overline M$ corresponds to a continuous function 
$$
-\log\|1\|_{\overline M}: X^\ran \lra \RR.
$$
It suffices to prove that $\log\|1\|_{\overline M}$ is constant on the fiber of any point of 
$(\Spec K)^\ran$. 

Let $\CU\to \CV$ and $\CW$ be as above. 
Then $\log\|1\|_{\overline M}$ extends to $\CU^\ran$. 
By $\overline M|_{\CU_\CW}\in \pi^*\wh\Pic(\CW)_\intb$, we see that 
$\log\|1\|_{\overline M}$ is constant on the fibers of $\CU^\ran\to \CV^\ran$ above 
$w_v$ for any 
closed point $w$ of $\CV_\QQ$ and any place $v$  of $\QQ$.
Here $w_v$ denotes the finite subset of classical points of $\CV_{v}^\ran$ corresponding to the finite subset of closed points $\CV_{\QQ_v}$ determined by $w$. 
By the density of $\{w_v\}_w$ in $ \CV_{\QQ_v}^\ran$, we conclude that 
$\log\|1\|_{\overline M}$ is constant on any fiber of $\CU^\ran\to \CV^\ran$. 
Then it is constant on any fiber of $X^\ran\to (\Spec K)^\ran$. 
It finishes the proof.

\subsection{Equality: case of curves}

For a curve over a finitely generated field, the most elegant Hodge index theorem should be a direct extension of the theorem of Faltings \cite{Fal} and Hriljac \cite{Hr}, which gives an identity between arithmetic intersections and Neron--Tate heights. This task is finished in \cite[Thm. 6.5.1]{YZ2}. 
In this section, we will use this theorem to prove Theorem \ref{hodge} for curves. 

The following is the statement of \cite[Thm. 6.5.1]{YZ2}, combined with the Northcott theorem of \cite[Thm. 5.3.1]{YZ2}.

\begin{thm} \label{hodge2}
Let $K$ be a finitely generated field over $\BQ$, and let $\pi: X\to \Spec K$ be a smooth,  projective, and geometrically connected curve.
Let $M\in \Pic(X)_\QQ$ with $\deg M=0$. 

Then there is an adelic line bundle $\overline M_0\in \wh\Pic(X)_\intb$ with underlying line bundle $M$ such that $\pi_*(\overline M_0\cdot\overline V)\equiv 0$ for any $\overline V\in \wh\Pic(X)_\vert$. 

Moreover, $\pi_*(\overline M_0\cdot\overline M_0)\preceq 0$ in 
$\wh\Pic(K)_\intb$; the equality $\pi_*(\overline M_0\cdot\overline M_0)\equiv 0$ holds if and only if $M=0$ in $\Pic(X)_\QQ$.
\end{thm}

It is easy to see that the theorem implies Theorem \ref{hodge} for curves. 
In fact, define $\overline N\in \wh\Pic(X)_\vert$ by 
$$
\overline M=\overline M_0+ \overline N. 
$$ 
Note that $\pi_*(\overline M_0\cdot\overline N)\equiv 0$. 
We have 
$$
\pi_*(\overline M\cdot\overline M)
\equiv \pi_*(\overline M_0\cdot\overline M_0)
+\pi_*(\overline N\cdot\overline N)
\preceq 0. 
$$
Here $\pi_*(\overline M_0\cdot\overline M_0)\preceq 0$ by Proposition \ref{hodge2} and  
$\pi_*(\overline N\cdot\overline N)\preceq 0$ by the vertical case of Theorem \ref{hodge}. 

If the equality holds, then $M=0$ by Theorem \ref{hodge2}. By the vertical case of Theorem \ref{hodge}, we conclude that $\overline M\in  \pi^*\wh \Pic(K)_\intb$.

\subsection{Equality: general case}

The proof of the equality part of Theorem \ref{hodge} is almost identical to that in \cite{YZ1}. We have already treated the case $n=1$, so we assume $n\geq 2$ in the following.  

\subsubsection*{Argument on the generic fiber}

Assume the conditions in the equality part of Theorem \ref{hodge}, which particularly includes
$$\pi_*(\overline M^2\cdot \overline L_1\cdots \overline L_{n-1}) \equiv 0.$$
We first show that $M$ is numerically trivial on $X$ by the condition  
$\overline L_{n-1}\gg 0$. 

The condition asserts that $\overline L_{n-1} '=\overline L_{n-1} - \overline N$ is nef for some $\overline N\in \wh\Pic(\QQ)$ with $\wh\deg(\overline N)>0$.
Then 
$$\pi_*(\overline M^2\cdot \overline L_1 \cdots \overline L_{n-2}\cdot \overline L_{n-1})
=\pi_*(\overline M^2\cdot \overline L_1 \cdots \overline L_{n-2}\cdot \overline L_{n-1}')
+(M^2\cdot  L_1 \cdots  L_{n-2})\,\overline N.$$
Applying the inequality of the theorem to $(\overline M, \overline L_1, \cdots, \overline L_{n-2}, \overline L_{n-1}')$, we have
$$\pi_*(\overline M^2\cdot \overline L_1 \cdots \overline L_{n-2}\cdot \overline L_{n-1}') \preceq 0.$$
By the Hodge index theorem on $X$ in the geometric case, we have
$$M^2\cdot  L_1 \cdots  L_{n-2}\leq 0.$$
Hence,
$$\pi_*(\overline M^2\cdot \overline L_1 \cdots \overline L_{n-2}\cdot \overline L_{n-1}')\equiv 0, \quad  M^2\cdot  L_1 \cdots  L_{n-2}
=0.$$

On the variety $X$, we have
$$
M\cdot  L_1 \cdots  L_{n-2}\cdot L_{n-1}=0,
\quad M^2\cdot  L_1 \cdots  L_{n-2}=0.
$$
By the Hodge index theorem on normal algebraic varieties, we conclude that $M$ is numerically trivial. 
See \cite[Appendix 1]{YZ1}.

\subsubsection*{Numerically trivial case}

We have proved that $M$ is numerically trivial on $X$, and now we continue to prove that $M$ is a torsion line bundle. 
Then a multiple of $\overline M$ is vertical and has already been treated. 
As in \cite{YZ1}, the key is still the variational method. 

\begin{lem} 
Let $\overline M, \overline L_1, \cdots, \overline L_{n-1}$ be integrable adelic line bundles on $X$ such that the following conditions hold:
\begin{enumerate}[(1)]
\item $M$ is numerically trivial on $X$;
\item $\overline M$ is $\overline L_i$-bounded for every $i$;
\item $\pi_*(\overline M^2\cdot \overline L_1\cdots \overline L_{n-1})\equiv 0$.
\end{enumerate}
Then for any nef adelic line bundles $\overline L_i^0$ on $X$ with underlying bundle $L_i^0$ numerically equivalent to $L_i$, and any integrable adelic line bundle $\overline M'$ with numerically trivial underlying line bundle $M'$, the following are true:
$$\pi_*( \overline M\cdot \overline M'\cdot \overline L_1^0\cdots \overline L_{n-1}^0) \equiv 0,$$
$$\pi_*(\overline M^2\cdot \overline M'\cdot \overline L_1^0\cdots \overline L_{n-2}^0) \equiv 0.$$
\end{lem}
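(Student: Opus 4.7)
The plan is to prove both identities via the variational method from \cite{YZ}, adapted to the vector-valued intersection theory in $\wh\Pic(K)_\intb$. Since $\equiv 0$ in $\wh\Pic(K)_\intb$ is equivalent to vanishing of the pairing with every nef tuple $(\overline N_1,\ldots,\overline N_d)\in \wh\Pic(K)_\nef^d$, each claim reduces to a real-valued polynomial non-positivity condition from which the vanishing of particular coefficients can be read off. The main input throughout is the inequality part of Theorem~\ref{hodge}, already established, applied repeatedly to perturbations of $\overline M$ and of the $\overline L_j$'s.

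First I would establish both identities in the base case $\overline L_i^0 = \overline L_i$. For the first identity, perturbing to $\overline M + s\overline M'$ preserves the numerical triviality of the underlying bundle, so Theorem~\ref{hodge} gives
$$
\pi_*((\overline M + s\overline M')^2\cdot \overline L_1\cdots \overline L_{n-1})\le 0.
$$
Expanding, the constant term vanishes by hypothesis~(3), and the coefficient of $s^2$ is $\le 0$ by the same inequality applied to $\overline M'$. The resulting real quadratic $2sa + s^2b$ (with $b\le 0$) being $\le 0$ for every $s\in\RR$ then forces $a = 0$, which is the first identity at the base level. For the second identity I would instead perturb a single $\overline L_i$ to $\overline L_i + \epsilon \overline M'$; writing $\overline M'$ as a difference of two nef adelic line bundles and using $\overline L_i\gg 0$, the perturbation is nef for $|\epsilon|$ sufficiently small, and its underlying bundle stays numerically equivalent to the big line bundle $L_i$. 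Applying Theorem~\ref{hodge} and using hypothesis~(3) to kill the leading term yields
$$
\epsilon\cdot\pi_*(\overline M^2\cdot\overline M'\cdot \overline L_1\cdots \overline L_{i-1}\cdot\overline L_{i+1}\cdots \overline L_{n-1})\le 0
$$
for both signs of $\epsilon$, giving the base identity for every choice of omitted slot $i$.

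To pass from the base case to general nef $\overline L_i^0$, I would write $\overline L_i^0 = \overline L_i + \overline\Delta_i$ with $\overline\Delta_i$ integrable and numerically trivial, and expand the two products in the conclusion multilinearly. Every resulting term has the shape
$$
\pi_*\Bigl(\overline M^{\varepsilon}\cdot \overline M' \cdot \overline\Delta_{i_1}\cdots\overline\Delta_{i_k} \cdot \prod_{j\notin I}\overline L_j\Bigr)
$$
with $\varepsilon\in\{1,2\}$, $I = \{i_1,\ldots,i_k\}$, and $k\ge 0$. I would prove these by simultaneous induction on $k$: the case $k=0$ is the previous paragraph. For the inductive step, I perturb one of the remaining $\overline L_j$'s by $\overline\Delta_{i_k}$ and apply the $(k-1)$-st case of the same identity to the new nef family, then extract the coefficient of the perturbation parameter. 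Nefness of the perturbation follows from $\overline L_j\gg 0$ together with the integrable decomposition of $\overline\Delta_{i_k}$, while preservation of hypothesis~(3) after the perturbation is itself an instance of the lower-$k$ version of the second identity; the two identities feed into each other and the induction closes.

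The hardest part will be the bookkeeping of this nested induction. At each step one must verify that the perturbed data still satisfies all hypotheses of the lemma---in particular, that hypothesis~(3) persists after replacing some $\overline L_j$ by $\overline L_j + t\overline\Delta$, and that $\overline M$ remains $\overline L_j$-bounded for the new $\overline L_j$. Both rely on $\overline L_j\gg 0$ together with the decomposition $\overline\Delta = \overline A - \overline B$ into nef pieces, so that small perturbations lie inside the nef cone and hypothesis~(2) is automatically inherited. Tracking the mutual dependence between the two identities at each level, and arranging the induction so that one never invokes a case that has not yet been proved, is the main technical task; but no new geometric ideas beyond the variational machinery described above are required.
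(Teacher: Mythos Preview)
Your reduction to real-valued intersection numbers by pairing against a nef tuple $\overline N_1,\ldots,\overline N_d$ is exactly the paper's step (it pairs against nef $\overline H_1,\ldots,\overline H_d$), and your quadratic argument for the first identity in the base case is precisely the Cauchy--Schwarz inequality the paper invokes from \cite{YZ}. So the overall strategy is right.

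The gap is in how you justify nefness of the perturbed polarizations. You claim that $\overline L_i+\epsilon\,\overline M'$ (and later $\overline L_j+\epsilon\,\overline\Delta_{i_k}$) is nef for small $\epsilon$ because $\overline L_i\gg 0$. But Definition~\ref{def-positivity}(3) only says $\overline L_i-\overline N$ is nef for some $\overline N\in\wh\Pic(k_1)$ of positive degree; this places $\overline L_i$ in the interior of the nef cone only along the constant direction, not along an arbitrary integrable direction. Writing $\overline M'=\overline A-\overline B$ with $\overline A,\overline B$ nef, there is no reason for $\overline L_i-\epsilon\overline B$ to be nef. Without this, your base case for the second identity and your inductive step to general $\overline L_i^0$ both fail as written. (Note too that the lemma as stated does not even assume $\overline L_i\gg 0$; and the same objection applies to your claim that $\overline M$ remains $\overline L_j$-bounded after such a perturbation.)

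The device actually used in \cite{YZ} is to perturb by $\overline M$, not by $\overline M'$ or $\overline\Delta$: hypothesis~(2) says exactly that $\overline L_i\pm\epsilon\overline M$ is nef, and its underlying bundle remains numerically $L_i$ since $M$ is numerically trivial. This gives $\pi_*(\overline M^{2+k}\cdot\prod_{j\in I}\overline L_j)\equiv 0$ for every $k\ge 0$ and every $I$ of size $n-1-k$, so hypothesis~(3) persists with each $\overline L_i$ replaced by $\overline L_i+t_i\overline M$. Cauchy--Schwarz with these perturbed polarizations then introduces $\overline M'$, and extracting $t$-coefficients yields both identities in the base case. The passage to general $\overline L_i^0$ goes one slot at a time: the base-case second identity with $\overline M'=\overline L_{n-1}^0-\overline L_{n-1}$ upgrades hypothesis~(3) to have $\overline L_{n-1}^0$ in that slot, and one repeats, using the $\overline L_j$-boundedness of $\overline M$ for the remaining original slots $j$.
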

\begin{proof}
The proof is similar to its counterpart in \cite{YZ1}. 
For example for the first equality, it suffices to prove  
$$
\overline M\cdot \overline M'\cdot \overline L_1^0\cdots \overline L_{n-1}^0 \cdot 
\pi^*\overline H_1\cdots\pi^*\overline H_d=0
$$
for any nef $\overline H_1, \cdots, \overline H_d\in \wh\Pic(\Spec K)_\intb$. 
For fixed $\overline H_1, \cdots, \overline H_d$, the intersection numbers still satisfy the Cauchy--Schwartz inequality. The proof can be carried here.
\end{proof}

Go back to the equality part of Theorem \ref{hodge}.  
Apply Bertini's theorem. Replacing $\overline L_{n-1}$ by a positive multiple if necessary, there
is a section $s\in H^0(X,L_{n-1})$ such that $Y=\div(s)$ is an integral subvariety of $X$, regular in codimension one. 
Then we have 
$$
\pi_*(\overline M^2\cdot \overline L_1\cdots \overline L_{n-2}\cdot \overline L_{n-1})
\equiv
\pi_*(\overline M^2\cdot \overline L_1\cdots \overline L_{n-2}\cdot Y).
$$ 
In fact, the difference of two sides is the limit of the intersection of $\overline M^2\cdot \overline L_1\cdots \overline L_{n-2}$ with vertical classes, so it vanishes by the second equality of the lemma. 
Hence, 
$$
\pi_*(\overline M^2\cdot \overline L_1\cdots \overline L_{n-2}\cdot Y)\equiv 0.
$$ 
By the Lefschetz hyperplane theorem, we can assume that 
$\Pic^0(X)_\QQ\to \Pic^0(Y)_\QQ$ is injective. 
It reduces the problem to $Y$.
The proof is complete since we have already treated the case of curves.

\section{Preperiodic points of algebraic dynamics}

The goal of this section is to prove Theorem \ref{dynamicsmain} on the rigidity of preperiodic points. The idea is very similar to the number field case in \cite{YZ1}.

\subsection{Review on admissible line bundles}

Let us first recall the theory of admissible adelic line bundles for polarizable algebraic dynamical systems over finitely generated fields in \cite[\S6.4]{YZ2}, which generalizes the results of \cite{Zh2, YZ1} over number fields. 

Let $K$ be a finitely generated field over $\BQ$.
Let $(X, f, L)$ be a \emph{polarized dynamical system} over a $K$, i.e.,
\begin{itemize}
\item $X$ is a projective variety over $K$;
\item $f:X\to X$ is a morphism over $K$;
\item $L\in \Pic(X)_\QQ$ is an ample line bundle such that
$f^*L=qL$ from some $q>1$.
\end{itemize}
We will further assume that $X$ is normal, which can always achieved by taking the normalization of $X$.

By \cite[Thm. 6.1.1]{YZ2}, there is an adelic $\QQ$-line bundle 
$\overline L_f\in \wh\Pic(X)_{\nef}$ extending $L$ and with $f^*\overline L_f=q\overline L_f$.  
This still follows from Tate's limiting argument.
By \cite[Thm. 6.1.1(2)]{YZ2}, $\overline L_f$ is actually strongly nef over $X$. 

For any closed $\overline K$-subvariety $Z$ of $X$, we have the canonical height
$$
\fh_{f}(Z)
=\fh_{\overline L_f}(Z):
= \frac{ \left\langle \overline L_f|_{\widetilde Z}\right\rangle^{\dim Z+1} }
{(\dim Z+1)\deg_{L}(\widetilde Z) }\ \in \wh \Pic(K)_{\nef}.$$
Here $\wt Z$ is the image of $Z$ in $X$. 
In particular, we have a height function
$$
\fh_f: X(\overline K) \lra \wh \Pic(K)_{\nef}.
$$
These heights can also be obtained by Tate's limiting argument. 

The height function $\fh_f$ is $f$-invariant. 
Moreover, for any point $x\in X(\overline K)$, $\fh_f(x)=0$ if and only if $x$ is preperiodic under $f$.

Fix $\overline H_1,\cdots, \overline H_d\in \wh\Pic(K)_{\nef}$, where $d$ is the transcendental degree of $K$ over $\QQ$. 
For any closed $\overline K$-subvariety $Z$ of $X$,  {the Moriwaki height of $Z$ with respect to $\overline L$ and 
$(\overline H_1,\cdots, \overline H_d)$} is
$$h_{\overline L_f}^{\overline H_1,\cdots, \overline H_d}(Z)
:=\fh_{\overline L_f}(Z)\cdot \overline H_1\cdots \overline H_d
=\frac{  \overline L_f|_{\widetilde Z}^{\dim Z+1} \cdot \overline H_1\cdots \overline H_d}
{(\dim Z+1)\deg_{L}(\widetilde Z) }.$$
Here the intersection numbers are taken in $\wh\Pic(\widetilde Z)_{\intb}$.
It gives a real-valued height function.

In \cite[Thm. 6.4.2]{YZ2}, the notion of $f$-invariant 
adelic line bundles is extended to $f$-admissible adelic line bundles.
Namely, the projection 
$$
\wh\Pic(X)\lra \Pic(X)_\QQ
$$
has a unique section 
$$
M\longmapsto \overline M_f
$$
as $f^*$-modules. 

Moreover, if $M\in \Pic_f(X)_\BQ$ is $f$-pure of weight 2 and ample, then $\overline M_f$ is nef. 
This generalizes the canonical height functions $\fh_{\overline L_f}$ and
$h_{\overline L_f}^{\overline H_1,\cdots, \overline H_d}$
 to 
$\fh_{\overline M_f}$ and
$h_{\overline M_f}^{\overline H_1,\cdots, \overline H_d}$.

Note that the projection 
$$
\Pic(X)_\QQ\lra \NS(X)_\QQ
$$
also has a unique $f^*$-equivariant section 
$$
\ell_f:  \NS(X)_\QQ \lra \Pic(X)_\QQ.
$$
Denote by
$$
\wh\ell_f:  \NS(X)_\QQ \lra \wh\Pic(X).
$$
the composition of the two sections.

\subsection{Preperiodic points}
The goal of this section is to prove  Theorem \ref{dynamicsmain}. 
By Lefschetz principle, we can assume that $K$ is finitely generated over $\QQ$.  
The following result refines the theorem. 
The condition of $X$ being normal can be obtained by taking a normalization. 

\begin{thm} \label{dynamicsrefine}
Let $X$ be a normal projective variety over a finitely generated field $K$. For any $f,g\in \DS(X)$, the following are equivalent:
\begin{itemize}
\item[(1)] $\Prep(f)= \Prep(g)$;
\item[(2)] $g\Prep(f)\subset\Prep(f)$;
\item[(3)] $\Prep(f)\cap \Prep(g)$ is Zariski dense in $X$;
\item[(4)] $\wh\ell_f=\wh\ell_g$ as maps from $\NS(X)_\QQ$ to 
$\wh\Pic(X)$.
\end{itemize}
\end{thm}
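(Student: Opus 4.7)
My plan is to establish the cycle $(1)\Rightarrow(2)\Rightarrow(3)\Rightarrow(4)\Rightarrow(1)$, with the first two implications and $(4)\Rightarrow(1)$ being short, and the real substance concentrated in $(3)\Rightarrow(4)$, which will be an application of the arithmetic Hodge index theorem. The implication $(1)\Rightarrow(2)$ is tautological. For $(2)\Rightarrow(3)$, Fakhruddin gives Zariski density of $\Prep(f)$, so it suffices to show $\Prep(f)\subseteq\Prep(g)$: given $x\in\Prep(f)$, the $g$-orbit $\{g^n(x)\}$ lies in $\Prep(f)$ by (2), has residue degree over $K$ bounded by $[K(x):K]$, and has $\fh_f(g^n(x))=0$ by Proposition~\ref{basicproperties}(3); the Northcott property of $\fh_f$ then forces this orbit to be finite, so $x\in\Prep(g)$. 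For $(4)\Rightarrow(1)$, pick an ample $\xi\in\NS(X)_\QQ$ so that $\overline L_f:=\wh\ell_f(\xi)$ and $\overline L_g:=\wh\ell_g(\xi)$ are both nef (Theorem~\ref{positivity}) and agree by hypothesis; the induced canonical heights on $X(\overline K)$ therefore coincide (up to classes killed in the quotient $\wh\Pic(X)_{[\QQ]}$ that contribute nothing to the zero-height locus), so $\fh_f(x)=0$ gives $\fh_g(x)=0$, and Proposition~\ref{basicproperties}(3) gives $x\in\Prep(g)$.

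For $(3)\Rightarrow(4)$, fix $\xi\in\NS(X)_\QQ$ and put $\overline M:=\wh\ell_f(\xi)-\wh\ell_g(\xi)\in\wh\Pic(X)_{\intb,[\QQ]}$; its underlying line bundle $M=\ell_f(\xi)-\ell_g(\xi)$ is numerically trivial. The strategy is to apply the equality clause of Theorem~\ref{hodge} to conclude $\overline M\in\pi^*\wh\Pic(K)_\intb$ and then eliminate the residual $K$-class by admissibility. Choose an ample class $\xi_0\in\NS(X)_\QQ$ and set $\overline L:=\wh\ell_f(\xi_0)+\wh\ell_g(\xi_0)$, which is nef, big, and satisfies $\overline L\gg 0$; since $\wh\ell_f(\xi)$ and $\wh\ell_g(\xi)$ are both integrable, $\overline M$ is $\overline L$-bounded. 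The crucial intermediate identity is
\begin{equation*}
\pi_*(\overline M^2\cdot \overline L^{n-1})\equiv 0\quad\text{in }\wh\Pic(K)_\intb.
\end{equation*}
To verify this, pair with any nef big $\overline H_1,\dots,\overline H_d\in\wh\Pic(K)$ induced by an integral model of $K$ (a Moriwaki polarization) and consider $\overline L+t\overline M$ for $t\in\QQ$ near $0$. On the Zariski dense set $\Prep(f)\cap\Prep(g)$, both $\fh_{\wh\ell_f(\xi_0)}$ and $\fh_{\wh\ell_g(\xi_0)}$ vanish by Proposition~\ref{basicproperties}(3), so the essential minimum $\lambda_1^{\overline H}(X,\overline L+t\overline M)$ is $O(t^2)$. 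Zhang's successive minima (Theorem~\ref{minima}) force the Moriwaki height $h^{\overline H_1,\dots,\overline H_d}_{\overline L+t\overline M}(X)$ to have vanishing linear term in $t$; combined with the Hodge index inequality applied to the quadratic term, this yields
\begin{equation*}
\pi_*(\overline M^2\cdot \overline L^{n-1})\cdot \overline H_1\cdots\overline H_d=0,
\end{equation*}
and running over a cofinal family of $(\overline H_i)$ gives the claimed numerical triviality.

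By the equality clause of Theorem~\ref{hodge}, $\overline M\in\pi^*\wh\Pic(K)_\intb$; since $M$ is therefore torsion we have $M=0$ in $\Pic(X)_\QQ$, and $\overline M=\pi^*\overline H$ for some $\overline H\in\wh\Pic(K)_\intb$. The final step uses that $\pi^*\overline H$ is $f^*$-invariant while $\wh\ell_f(\xi)$ and $\wh\ell_g(\xi)$ decompose into $f^*$-eigenspaces with eigenvalues of absolute values $q^{1/2}$ or $q>1$, all distinct from the eigenvalue $1$ on $\pi^*\wh\Pic(K)_\intb$: iterating $f^*$ on the identity $\wh\ell_f(\xi)-\wh\ell_g(\xi)=\pi^*\overline H$ and diagonalizing forces $\overline H=0$ in $\wh\Pic(X)_{[\QQ]}$, so $\wh\ell_f(\xi)=\wh\ell_g(\xi)$. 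The main obstacle is the successive-minima step: translating Zariski density of $\Prep(f)\cap\Prep(g)$ into a uniform quadratic estimate on the essential minimum of the perturbed bundle $\overline L+t\overline M$ that is simultaneously valid for all auxiliary polarizations $(\overline H_i)$. The argument closely parallels the number field proof in \cite{YZ}, but depends essentially on Theorem~\ref{hodge} and Theorem~\ref{minima} being available over finitely generated fields.
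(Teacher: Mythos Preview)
Your overall architecture matches the paper's: the same cycle $(1)\Rightarrow(2)\Rightarrow(3)\Rightarrow(4)\Rightarrow(1)$, with the substance in $(3)\Rightarrow(4)$ via Theorem~\ref{hodge}. The easy implications are fine. However, the argument for $(3)\Rightarrow(4)$ has a genuine gap. The serious missing ingredient is the step ``running over a cofinal family of $(\overline H_i)$ gives the claimed numerical triviality.'' Theorem~\ref{minima} only applies to a \emph{single} $\overline H$ satisfying the Moriwaki condition (nef model class with $\overline H^{d+1}=0$), not to arbitrary nef big tuples; and from $\pi_*(\overline M^2\cdot\overline L^{n-1})\cdot\overline H^d=0$ for all such $\overline H$ one cannot conclude numerical triviality by density, since Moriwaki classes sit on a codimension-one locus and are in no sense cofinal. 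The paper isolates this as Lemma~\ref{moriwaki condition} and devotes a five-step proof to it (Noether normalization and a dynamical $\overline\CH_0$ on $\BP^d_\ZZ$ to kill heights on horizontal curves, a pseudo-effectivity bound on the generic fiber, an essential-minimum comparison, and Zhang's successive minima over $\ZZ$). This lemma is the technical heart of the passage from Moriwaki-height vanishing to $\pi_*(\cdots)\equiv 0$, and your outline skips it.

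There are also smaller issues in your execution. First, ``integrable'' does not imply $\overline L$-bounded; you should reduce by linearity to $\xi$ ample and take $\xi_0=\xi$, so that $\overline L\pm\overline M=2\wh\ell_f(\xi),\,2\wh\ell_g(\xi)$ are nef. Second, with this choice $\overline L+t\overline M$ is nef for $|t|\le 1$, so its Moriwaki height is $\ge 0$ everywhere and $=0$ on the dense set $\Prep(f)\cap\Prep(g)$; hence $\lambda_1=0$ exactly (not merely $O(t^2)$), and together with nefness this forces $(\overline L+t\overline M)^{n+1}\cdot\overline H^d=0$ for all $t\in[-1,1]$, so every coefficient vanishes --- this is exactly the paper's binomial argument. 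Your appeal to the Hodge index inequality to pin the quadratic term at zero does not work, since both the successive-minima bound and Hodge index point the same way ($\le 0$). Third, $\overline L\gg 0$ is not automatic for $\wh\ell_f(\xi_0)+\wh\ell_g(\xi_0)$; the paper adds $\pi^*\overline C$ with $\wh\deg(\overline C)>0$ before invoking Theorem~\ref{hodge}. Finally, your eigenvalue argument to kill the residual $\pi^*\overline H$ is unclear because $\wh\ell_g(\xi)$ is $g$-admissible, not $f$-admissible, so the difference has no evident $f^*$-eigendecomposition; the paper instead simply evaluates at any $x\in\Prep(f)\cap\Prep(g)$, where both admissible heights vanish, forcing $\overline H=0$.
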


As in the number field case, we prove (1) $\Rightarrow$ (2) $\Rightarrow$ (3) $\Rightarrow$ (4) $\Rightarrow$ (1). 
The proofs of the easy directions are similar to the number field case.
In the implication (2) $\Rightarrow$ (3), we need the finiteness of 
$$\Prep (f,r):=\{x\in \Prep(f)\ |\ \deg(x)<r \}.$$
It is given by Northcott's property of the Moriwaki height (cf. \cite[Thm. 5.3.1]{YZ2}). 
In the following, we prove the hard implication (3) $\Rightarrow$ (4).

\subsubsection*{Applying the Hodge index theorem}

Assume that $\Prep (f)\cap\Prep (g)$ is Zariski dense in $X$. 
As usual, write $n$ for the dimension of $X$ and $d$ for the transcendental degree of $K$ over $\QQ$. 
We need to prove $\wh\ell_f(\xi)=\wh\ell_g(\xi)$ for any $\xi\in\NS(X)_\QQ$. 
By linearity, it suffices to assume that $\xi$ is ample. 

Denote $L=\ell_f(\xi)$ and $M=\ell_g(\xi)$. They are ample $\QQ$-line bundles on $X$. Then the admissible extensions $\overline L_f=\wh\ell_f(\xi)$ and $\overline M_g=\wh\ell_g(\xi)$ are nef by \cite[Thm. 6.4.2]{YZ2}. 

Consider the sum $\overline N=\overline L_f+\overline M_g$, which is still nef.
By the fundamental inequality in \cite[Thm. 5.3.2]{YZ2}, 
$$\lambda_1^{\overline H}(X,\overline N) \geq h_{\overline N}^{\overline H}(X)
$$
for any  $\overline H\in\wh\Pic (K)_\nef$ satisfying the Moriwaki condition.
Here $\overline H$ is said to satisfy the Moriwaki condition if it is induced by a nef hermitian line bundle on a projective model of $K$, the geometric top self-intersection number $H^{d}>0$, and the arithmetic top self-intersection number $\overline H^{d+1}=0$. 

Note that the essential minimum $\lambda_1^{\overline H}(X,\overline N)=0$
since $h_{\overline N}^{\overline H}$ is zero on
$\Prep (f)\cap\Prep (g)$, which is assumed to be Zariski dense in $X$.
It forces $h_{\overline N}^{\overline H}(X) = 0.$
Write in terms of intersections, we have 
$$
(\overline L_f+\overline M_g)^{n+1}\cdot \overline H^d=0.
$$
Expand it by the binomial formula. 
Note that every term is non-negative. 
It follows that 
$$
\overline L_f^{i}\cdot\overline M_g^{n+1-i}\cdot \overline H^d=0, \quad\forall
i=0,1,\cdots, n+1.
$$
It is true for any $\overline H$ satisfying the Moriwaki condition. 
We can remove the dependence on $\overline H$ by the following result.

\begin{lem} \label{moriwaki condition}
Let $\overline Q \in \wh\Pic (K)_\nef$ be a nef adelic line bundle such that the intersection number 
$\overline Q\cdot \overline H^d=0$ for any 
$\overline H\in\wh\Pic (K)_\nef$ satisfying the Moriwaki condition. 
Then $\overline Q$ is numerically trivial. 
\end{lem}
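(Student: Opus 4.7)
The plan is to reduce the problem to a fixed projective arithmetic model of $K$ and then deduce the conclusion from an algebraic-geometric argument about the self-intersection form restricted to a finite-dimensional slice.

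By the direct-limit construction $\wh\Pic(K)_\intb = \varinjlim \wh\Pic(\CU)_\intb$ and the continuity of the intersection pairing in the $\CD$-topology (Proposition~\ref{intersection1}), I would first realize $\overline Q$ and any finite collection of integrable test classes $\overline N_1,\dots,\overline N_d$ on a common projective arithmetic model $\CB$ of $K$, of arithmetic dimension $d+1$. Multilinearity of the intersection product, together with the non-negativity of $\overline Q\cdot\overline N_1\cdots\overline N_d$ for nef tuples, then reduces the lemma to showing $\overline Q\cdot\overline N^d=0$ for every nef $\overline N\in\wh\Pic(\CB)$: indeed, setting $\overline N=\sum_i \overline N_i$ and expanding multinomially writes $d!\,\overline Q\cdot\overline N_1\cdots\overline N_d$ as one of several nonnegative summands of $\overline Q\cdot \overline N^d$, and linearity extends from nef to integrable classes.

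For this remaining assertion, I would fix a nef $\overline N$ and work in a finite-dimensional real subspace $W\subset \wh\Pic(\CB)_\intb\otimes_\QQ \RR$ containing $\overline Q$, $\overline N$, and enough ample classes so that the nef cone has nonempty interior in $W$. The assignments $P(\overline H):=\overline Q\cdot\overline H^d$ and $F(\overline H):=\overline H^{d+1}$ are then homogeneous polynomial functions on $W$ of degrees $d$ and $d+1$, respectively. The hypothesis of the lemma says precisely that $P$ vanishes on the semi-algebraic set of nef classes in $W$ with $F(\overline H)=0$ (the Moriwaki locus). Granting that this nef Moriwaki subset is Zariski dense in the complex hypersurface $\{F=0\}\subset W\otimes_\RR \CC$, $P$ vanishes on the entire hypersurface; by the Hilbert Nullstellensatz, the squarefree radical of $F$ divides $P$, so provided $F$ is squarefree on $W$, $\deg\operatorname{rad}(F)=d+1>d=\deg P$ forces $P\equiv 0$ on $W$, and hence $\overline Q\cdot \overline N^d=0$.

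The main obstacle is the density/Nullstellensatz step: one must verify that $F$ can be arranged to be squarefree on $W$ and that the nef Moriwaki subset is Zariski dense in $\{F=0\}$. Both requirements should be addressed by a sufficiently rich choice of $W$. A useful observation is that for any flat morphism $\pi:\CB\to \CY$ of relative dimension $1$ to a lower-dimensional projective arithmetic model $\CY$, any pullback $\pi^*\overline A$ of a nef class on $\CY$ automatically lies in the Moriwaki locus, since $(\pi^*\overline A)^{d+1}=0$ by the projection formula for dimensional reasons; these provide a rich supply of nef Moriwaki test classes to include in $W$. An alternative, possibly cleaner route would be to argue via the arithmetic Hodge index theorem (Theorem~\ref{hodge}) together with Cauchy--Schwarz to extract numerical triviality directly, though this still requires passing from a Moriwaki-type hypothesis to an unrestricted nef one.
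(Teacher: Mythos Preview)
Your approach is genuinely different from the paper's, and the density/Nullstellensatz step you flag as ``the main obstacle'' is a real gap that I do not see how to close.

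The difficulty is this: you need the nef Moriwaki locus --- nef model classes $\overline H$ with $\overline H^{d+1}=0$ --- to be Zariski dense in the complex hypersurface $\{F=0\}\subset W_\CC$. But since $F\ge 0$ on the entire nef cone, this locus lies on the \emph{boundary} of the nef cone, and there is no reason for that boundary to meet $\{F=0\}$ in a top-dimensional piece; the nef cone can sit strictly inside $\{F>0\}$ and touch $\{F=0\}$ only along low-dimensional faces. Your proposed source of Moriwaki classes --- pullbacks $\pi^*\overline A$ along relative-dimension-$1$ fibrations $\pi:\CB\to\CY$ --- produces cones lying in \emph{linear subspaces} $\pi^*\wh\Pic(\CY)_\RR\subset W$, and a union of such linear subspaces, even over many $\pi$, has no reason to be Zariski dense in the hypersurface $\{F=0\}$. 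The squarefreeness of $F$ (and in fact its irreducibility, which you would also need lest the Moriwaki locus lie on a single component of $\{F=0\}$) is a separate unverified hypothesis; neither can be arranged just by enlarging $W$, since $F$ is determined by the intersection form and not by your choice of slice.

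The paper's proof takes a completely different, height-theoretic route. Using a single well-chosen Moriwaki class --- the pullback via Noether normalization of the canonical (Weil) metric on $\CO(1)$ over $\BP^d_\ZZ$, for which $\overline\CH_0^d$ is represented by an explicit effective arithmetic $1$-cycle --- one first shows $h_{\overline Q}\equiv 0$ on $\CV(\overline\QQ)$. From this one deduces that the generic fiber $Q$ is numerically trivial; then, approximating $\overline Q$ by nef model classes $\overline\CQ_m$ and applying Zhang's successive minima to $\overline\CH+\overline\CQ_m$ for ample $\overline\CH$, one bounds $(\overline\CH+t\overline Q)^{d+1}$ independently of $t>0$, and expanding forces $\overline Q\cdot\overline\CH^d=0$. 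The passage from Moriwaki classes to all ample (hence all nef, hence all integrable) classes is thus mediated by the height machinery, not by polynomial interpolation on the Moriwaki hypersurface.
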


We will prove the lemma later. 
With the lemma, we have
$$
\pi_*(\overline L_f^{i}\cdot\overline M_g^{n+1-i})\equiv 0, \quad\forall
i=0,1,\cdots, n+1.
$$
Then the proof is similar to the number field case. 
In fact, we have 
$$
\pi_*((\overline L_f-\overline M_g)^2 \cdot (\overline L_f+\overline M_g)^{n-1})
\equiv 0.
$$
We still have
$$(L-M)\cdot (L+M)^{n-1}=0$$
since $L-M\in \Pic^0(X)_\QQ$ is numerically trivial. 
Apply Theorem \ref{hodge} to 
$$(\overline L_f-\overline M_g,\ \overline L_f+\overline M_g).
$$ 
It is trivial that $(\overline L_f-\overline M_g)$ is $(\overline L_f+\overline M_g)$-bounded.
To meet the condition $\overline L_f+\overline M_g\gg 0$,
we can take any $\overline C\in \wh\Pic(\QQ)$ with $\deg(\overline C)>0$, and replace 
$$
(\overline L_f-\overline M_g,\ \overline L_f+\overline M_g)
$$
by 
$$
(\overline L_f-\overline M_g, \ \overline L_f+\overline M_g+\pi^*\overline C).
$$
Then all the conditions are satisfied. 
The theorem implies that 
$$
\overline L_f-\overline M_g \in \pi^*\wh\Pic(K)_\intb.
$$
By evaluating at any point $x$ in $\Prep (f)\cap\Prep (g)$,
we see that 
$$
\overline L_f-\overline M_g =0
$$
in $\wh\Pic(X)_{\intb}$. 
It proves the theorem.

\subsubsection*{Local version}

Now we prove Theorem \ref{dynamicslocal}. 
It can be viewed as a local version of Theorem \ref{dynamicsrefine}. 
For that purpose, we first extend the notion of $f$-admissibility to the local setting. 

Let $\BK$ be either $\CC$ or $\CC_p$. Let $(X, f, L)$ be a {polarizable dynamical system} over $\BK$. Assume that $X$ is normal of dimension $n$.  
The exact sequence
$$0\lra \Pic ^0(X)_\BQ\lra \Pic(X)_\BQ\lra \NS (X)_\BQ\lra 0.$$
still has a natural splitting
$$\ell_f:  \NS (X)_\BQ\lra \Pic(X)_\BQ.$$
In fact, since $\NS (X)$ is a finitely generated $\ZZ$-module, 
we can find a finitely generated subfield $K$ of $\BK$ such that $(X,f)$ and all elements of $\NS(X)$ are defined over $K$. Then the lifting $\ell_f$ is defined, and does not depend on the choice of $K$. We say that elements of $\Pic(X)_\BQ$ in the image of $\ell_f$ are  \emph{$f$-pure of weight $2$}.

Denote by $\wh\Pic(X)$ the group of line bundles $L$ on $X$, with a continuous $\BK$-metric on the corresponding Berkovich space $X^\Ber$. Note that if $\BK=\CC$, it is the usual complex analytic space. 
As in the finitely generated case, we have a unique section 
$$\wh\ell_{f,\BK}:  \NS (X)_\BQ\lra \wh\Pic(X)_\BQ/\RR^\times$$
extending $\ell_f$. The group $\RR^\times$ acts on $\wh\Pic(X)$ by scalar multiplication on the metrics. 

For any $M\in \Pic(X)_\RR$ which is $f$-pure of weight 2,  denote by $\overline M_f$ the image of the algebraic equivalence class of $M$ under $\wh\ell_f$. If $M$ is furthermore ample, then the metric of $\overline M_f$ is semipositive.  
In that case, the equilibrium measure
$$d\mu_f= \frac{1}{\deg(M)} c_1(\overline M_f)^n.$$
In fact, by decomposing $M$ into $f$-eigencomponents. It suffices to check 
$$d\mu_f= \frac{1}{M_1\cdot M_2\cdots M_n} c_1(\overline M_{1,f})\wedge 
c_1(\overline M_{2,f})\wedge\cdots \wedge c_1(\overline M_{n,f})$$
for eigenvectors $M_1,\cdots, M_n$ of $f^*$ in $\Pic(X)_\CC$. 
The identity is understood in terms of linear functionals on the space of complex-value continuous functions on $X^\Ber$. It holds since both sides are $f^*$-invariant, and the uniqueness of $d\mu_f$ coming from Tate's limiting method. 
The following theorem refines Theorem \ref{dynamicslocal}.

\begin{thm} 
Let $\BK$ be either $\CC$ or $\CC_p$ for some prime $p$. 
Let $X$ be a normal projective variety over $\BK$, and let $f,g\in \DS(X)$ be two polarizable algebraic dynamical system over $X$ such that $\Prep(f)\cap \Prep(g)$ is Zariski dense in $X$.
Then $\wh\ell_{f,\BK}=\wh\ell_{g,\BK}$ as maps from $\NS(X)_\QQ$ to 
$\wh\Pic(X)_{\QQ}/\RR^\times$.
\end{thm}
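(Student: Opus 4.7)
The plan is to reduce to the finitely generated field case via a spreading-out argument, invoke Theorem~\ref{dynamicsrefine}, and then restrict the resulting global admissible line bundle back to the local Berkovich fiber determined by $K\hookrightarrow \BK$.

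First, since $\NS(X)_\BQ$ is a finitely generated abelian group and the data $(X,f,g)$ is of finite type, I would choose a finitely generated subfield $K\subset \BK$ together with a normal projective variety $X_K/K$ and polarizable dynamical systems $f_K,g_K\in \DS(X_K)$ whose base change to $\BK$ recovers $(X,f,g)$, arranged so that $\NS(X_K)_\BQ=\NS(X)_\BQ$. Since $\cha(\BK)=0$, the field $K$ is a finitely generated extension of $\BQ$. Preperiodic points of a polarizable dynamical system are automatically algebraic over the field of definition, so $\Prep(f)\cup \Prep(g)\subset X(\overline K)$, and Zariski density in $X_\BK$ is equivalent to Zariski density in $X_K$ (the Zariski closure of a $\overline K$-point set descends under $\overline K\hookrightarrow \BK$). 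Hence $\Prep(f_K)\cap \Prep(g_K)$ is Zariski dense in $X_K$, and the implication $(3)\Rightarrow(4)$ of Theorem~\ref{dynamicsrefine} gives
$$\wh\ell_{f_K}=\wh\ell_{g_K}\colon\ \NS(X_K)_\BQ \lra \wh\Pic(X_K).$$

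Next, the embedding $K\hookrightarrow \BK$ determines a point of $(\Spec K)^\an$ over which the fiber of the global Berkovich space $X_K^\an$ contains the local analytic space $X^\Ber=X_\BK^\Ber$ naturally. Pulling a continuous metric on an adelic line bundle back to $X^\Ber$ yields a restriction homomorphism $\wh\Pic(X_K)\lra \wh\Pic(X)$ that commutes with $f^*$ by functoriality of base change. Therefore the image of $\wh\ell_{f_K}(\xi)$ is an $f^*$-admissible metrized line bundle on $X^\Ber$ extending $\ell_f(\xi)$, and similarly for $g$; by the uniqueness half of the local analogue of Theorem~\ref{admisible metric}, these images coincide with $\wh\ell_{f,\BK}(\xi)$ and $\wh\ell_{g,\BK}(\xi)$ respectively in $\wh\Pic(X)_\BQ/\RR^\times$. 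The global equality then yields $\wh\ell_{f,\BK}(\xi)=\wh\ell_{g,\BK}(\xi)$ for every $\xi$.

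The main obstacle is the last step: establishing local uniqueness of the $f$-admissible lift modulo $\RR^\times$ together with its compatibility with restriction. Uniqueness should follow by the spectral argument of Theorem~\ref{admisible metric}: since $f^*$ on $\Pic(X)_\BC$ has no eigenvalues of absolute value $1$, two admissible lifts of the same class differ by a vertical eigenclass whose $\log\|1\|\in C(X^\Ber,\BR)$ must be a constant function, producing exactly the $\RR^\times$ ambiguity coming from the scalar in the chosen isomorphism $f^*L\cong qL$. With this in hand, the compatibility that a global $f^*$-eigen adelic line bundle restricts to a local $f^*$-eigen metrized line bundle on $X^\Ber$ is formal, reducing to matching the global Berkovich space $X_K^\an$ of Section~2 with the local analytic space $X^\Ber$ at the place induced by $K\hookrightarrow \BK$.
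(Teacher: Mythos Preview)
Your proposal is correct and follows essentially the same route as the paper: descend $(X,f,g)$ to a finitely generated subfield $K\subset\BK$, apply Theorem~\ref{dynamicsrefine} to obtain $\wh\ell_{f_K}=\wh\ell_{g_K}$, and then restrict along the point of $(\Spec K)^\an$ determined by the valuation on $\BK$ to identify the fiber $X^\an_{\eta^\an}$ with $X^\Ber$. The paper's argument is terser---it does not spell out why Zariski density of $\Prep(f)\cap\Prep(g)$ descends, nor why the restriction of the global admissible lift agrees with the local $\wh\ell_{f,\BK}(\xi)$---whereas you make both of these points explicit, which is a virtue rather than a deviation.
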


Let us see how to obtain the result from Theorem \ref{dynamicsrefine}. 
Let $K$ be a finitely generated subfield of $\mathbb K$ such that $(X,f,g)$ is defined over $K$; namely, $(X,f,g)$ is isomorphic to the base change of a triple $(X_0,f_0,g_0)$ from $K$ to $\mathbb K$. 
We can further assume that 
 $\NS(X)=\NS(X_0)$, which can be achieved by enlarging $K$ since $\NS(X)$ is finitely generated. 
 
 Consider the inclusion $\eta:K\hookrightarrow \mathbb K$.  
The canonical absolute value on $\mathbb K$ induces a 
point $\eta^\an$ of $(\Spec K)^\an$. 
By definition, the fiber $X^\an_{0,\eta^\an}$ of $X_0^\an$ above $\eta^\an$ is isomorphic to $X^\Ber$. 
For any $\xi\in \NS(X_0)_\QQ$, by Theorem \ref{dynamicsrefine}, we have 
$\wh\ell_{f_0}(\xi)=\wh\ell_{g_0}(\xi)$ in $\wh\Pic(X_0)_\intb$ in the setting of finitely generated fields.
The identity is viewed as an equality of metrics on $X_0^\an$. 
Restricted to the fiber $X^\Ber$, we have 
$\wh\ell_{f,\BK}(\xi)=\wh\ell_{g,\BK}(\xi)$.  
The result is proved.

\subsubsection*{Moriwaki condition}

It remains to prove Lemma \ref{moriwaki condition}. 
It takes a few steps. Assume $\overline Q\in \wh\Pic(\CV)_\intb$ for some quasi-projective arithmetic model $\CV$ of $K$. 

\medskip

\noindent \textbf{Step 1.}
Replacing $\CV$ by an open subscheme if necessary, we can assume that there is a finite morphism $\psi: \CV\to \CV_0$ for some open subschemes $\CV_0$ of $\BP^d_\ZZ$. This follows from Noether's normalization lemma.
The goal of this step is to prove that the height function $h_{\overline Q}$ on $\CV(\overline\QQ)$ associated to $\overline Q$ is identically 0. Namely, for any horizontal closed integral subscheme $\CW$ of dimension one in $\CV$, the restriction $\overline Q|_\CW\in \wh\Pic(\CW)_\intb$ has arithmetic degree 0. 

We first treat the case that $\CW$ has degree 1 over $\ZZ$. By automorphism of 
$\BP^d_\ZZ$, we can further assume that the image of $\CW_\QQ$ is exactly the rational point $W_0=(0,\cdots, 0,1)$ of $\BP^d_\ZZ$. 
Denote by $\CW_0$ the Zariski closure of $W_0$ in $\BP^d_\ZZ$. 
Take the metrized line bundle $\overline \CH_0=(\CO(1), \|\cdot\|_0)$ on $\BP^d_\ZZ$ satisfying the dynamical property that the pull-back of $\overline \CH_0$ by the square map is isometric to $2\overline \CH_0$. 
Note that the Moriwaki condition $\CH_{0,\QQ}^{d}>0$ and $\overline \CH_0^{d+1}=0$ is satisfied.
By the coordinate sections of $\CO(1)$, we see that 
$\overline \CH_0^d$ is represented by the arithmetic 1-cycle $(\CW_0, \mathfrak{g}_0)$ for some positive current $\mathfrak{g}_0$ on $\BP^d(\CC)$. 
Then we have 
$$
0= \overline Q \cdot \psi^* \overline \CH_0^{d}
= \overline Q \cdot \psi^*(\CW_0, \mathfrak{g}_0)
\geq  \overline Q \cdot \psi^*\CW_0
\geq   \overline Q \cdot \CW
\geq 0.  
$$
It follows that $\overline Q \cdot \CW=0$. 
Here we used the nefness of $\overline Q$, and the inequalities can be justified by approximating $\overline Q$ by nef hermitian line bundles on projective models. 

If $\CW$ has higher degree over $\ZZ$, we take a base change from $\ZZ$ to a finite \'etale extension $O_F$ to split $\CW$. Then we can arrange such that $\psi_{O_K}: \CV_{O_K}\to \CV_{0,O_K}$ maps $\CW_\QQ$ to $W_0=(0,\cdots, 0,1)$ of $\BP^d_{O_K}$. The proof still works.

\medskip

\noindent \textbf{Step 2.}
Define $\wt Q$ to be the image of $\overline Q$ under the canonical map $\wh\Pic(\CV)\to \wh\Pic(\CV_\QQ/\QQ)$.
See \cite[\S2.5.5]{YZ2} for the definition of this map, which is obtained applying the base change $\Spec \QQ\to \Spec \ZZ$ to the arithmetic models of $\overline Q$.
The goal of this step is to prove that $\wt Q$ is numerically trivial in 
$\wh\Pic(\CV_\QQ/\QQ)$. In other words, 
$$
\wt Q\cdot   A_1\cdots  A_{d-1}=0
$$
for any $A_1,\cdots,  A_{d-1}\in \Pic(\CB_{m,\QQ})$ and any projective model $\CB_{m}$ of $\CV$.

By definition, we can assume that $\overline Q$ is the limit of a sequence of nef hermitian line bundles $\overline \CQ_m$ on projective models $\CB_m$ of 
$\CV$. We can further assume that $\CB_m$ dominates a projective model $\CB$ of 
$\CV$, and there is an effective arithmetic divisor $\overline \CD=(\CD,g_\CD)$ whose finite part is supported on $\CB\setminus \CV$, such that 
$$
-\epsilon_m \overline \CD 
\leq \overline \CQ_m -\overline Q
\leq \epsilon_m \overline \CD  
$$
from some sequence $\epsilon_m\to 0$.
Here the inequality is understood in terms of effectivity of divisors. 

It follows that the height function associated to 
$\epsilon_m \overline \CD-\overline\CQ_m$ is positive on $\CV(\overline\QQ)$. 
In particular, the height function is bounded below on any complete curve in
$\CB_{m,\QQ}$ which intersects $\CV_{\QQ}$. 
Then the generic fiber $\epsilon_m  \CD_\QQ-\CQ_{m,\QQ}$
is nef on such curves. This implies that $\epsilon_m  \CD_\QQ-\CQ_{m,\QQ}$ is pseudo-effective. 

By Bertini's theorem, it is easy to have
$$
(\epsilon_m  \CD_\QQ-\CQ_{m,\QQ}) \cdot A_1\cdots  A_{d-1}\geq 0
$$
for any ample line bundles $A_1,\cdots, A_{d-1}$ on $\CB_{m,\QQ}$. 
Set $m\to\infty$ and use the nefness of $\CQ_{m,\QQ}$.
We have
$$
\wt Q \cdot A_1\cdots  A_{d-1}= 0.
$$
The result follows by taking linear combinations.

\medskip

\noindent \textbf{Step 3.}
Let $\overline \CH$ be any ample hermitian line bundle on $\CB$. 
Recall the essential minimum
$$
\lambda_1(\overline \CH)
=\lambda_1(\CV_\QQ, \overline \CH)
=\sup_{V'\subset \CV_\QQ}\
\inf_{x\in V'(\overline \QQ)} 
h_{\overline \CH}(x).$$
The supremum runs through all open subschemes $V'$ of $\CV_\QQ$. 
Apply the fundamental inequality in \cite[Thm. 5.3.3]{YZ2} to $\overline \CH+\overline Q$. 
We have  
$$
\lambda_1(\overline \CH+\overline Q)
\geq \frac{1}{(d+1)(\CH_\QQ+\wt Q)^d}(\overline \CH+\overline Q)^{d+1}.
$$ 
By the previous two steps, we end up with
$$
\lambda_1(\overline \CH)
\geq \frac{1}{(d+1)\CH_\QQ^d}(\overline \CH+\overline Q)^{d+1}.
$$ 
Replacing $\overline Q$ by a positive multiple, we see that 
$$
(\overline \CH+t\overline Q)^{d+1}
$$
is bounded for any $t>0$. 
It particularly implies that
$$
\overline Q\cdot \overline \CH^{d}=0.
$$

\medskip

\noindent \textbf{Step 4.} It is formal to show that $\overline Q$ is numerically trivial from the property that $\overline Q\cdot \overline \CH^{d}=0$
for any ample hermitian line bundle $\overline \CH$
on $\CB$.  

In fact, for any ample hermitian line bundles $\CHH_1,\cdots, \CHH_d$ on $\CB$, we have
$$
\overline Q\cdot (t_1\overline \CH_1+\cdots + t_d\overline \CH_d)^{d}=0.
$$
It is true for all positive real numbers $t_1,\cdots, t_d$, which forces 
$$
\overline Q\cdot  \overline\CH_1\cdots \overline \CH_d=0.
$$
By linear combinations, it is true for any hermitian line bundles $\CHH_1,\cdots, \CHH_d$ on $\CB$. By varying $\CB$ and taking limits, it is true for any $\CHH_1,\cdots, \CHH_d$ in $\wh\Pic(K)_\intb$.

\end{document}